\newcommand{\cat}[1]{\mathcal{#1}}
\newcommand{\bicat}[1]{\mathbb{#1}}
\newcommand{\catname}[1]{\mathsf{#1}}
\newcommand{\adj}{\dashv}
\newcommand{\dash}{\text{-}}
\newcommand{\iso}{\cong}
\newcommand{\eqv}{\simeq}
\renewcommand{\leq}{\leqslant}
\newcommand{\pt}{\boldsymbol{\cdot}}
\renewcommand{\Join}{\bigvee}
\newcommand{\meet}{\land}
\newcommand{\conn}{\catname{conn}}
\newcommand{\op}{\catname{op}}
\newcommand{\Cat}{\catname{Cat}}
\newcommand{\Nat}{\catname{Nat}}
\newcommand{\Top}{\catname{Top}}
\newcommand{\Stn}{\catname{Stn}}
\newcommand{\Set}{\catname{Set}}
\newcommand{\CHaus}{\catname{CHaus}}
\newcommand{\CAT}{\catname{CAT}}
\newcommand{\MndCat}{\catname{MndCat}}
\newcommand{\SymMndCat}{\catname{SymMndCat}}
\newcommand{\CartCat}{\catname{CartCat}}
\renewcommand{\hom}{\catname{hom}}
\newcommand{\CatB}{\Cat(\cat B)}
\newcommand{\CatV}{\Cat(\cat V)}
\newcommand{\VCat}{\cat V\dash \Cat}
\newcommand{\WCat}{\cat W\dash \Cat}
\newcommand{\FamV}{\Fam(\cat V)}
\newcommand{\FamVCat}{\FamV\dash\Cat}
\newcommand{\SetCat}{\Set\dash\Cat}
\newcommand{\TVCat}{(T,\cat V)\dash \Cat}
\newcommand{\CatTV}{\Cat(T, \cat V)}
\DeclareMathOperator{\id}{\catname{id}}
\DeclareMathOperator{\ob}{\catname{ob}}
\DeclareMathOperator{\Fam}{\catname{Fam}}
\DeclareMathOperator{\Desc}{\catname{Desc}}
\DeclareMathOperator{\PsPb}{\catname{PsPb}}
\DeclareMathOperator{\colim}{\catname{colim}}
\DeclareMathOperator{\Ker}{\catname{ker}}
\DeclareMathOperator{\mult}{\mathsf{c}}
\DeclareMathOperator{\unit}{\mathsf{u}}
\DeclareMathOperator{\multcomp}{\mathsf{m}}
\DeclareMathOperator{\unitcomp}{\mathsf{e}}
\newcommand{\cc}[1]{\mult_{#1}}
\newcommand{\uu}[1]{\unit_{#1}}
\newcommand{\m}[1]{\multcomp^{#1}}
\newcommand{\e}[1]{\unitcomp^{#1}}
\newtheorem{lemma}{Lemma}[section]
\newtheorem{proposition}[lemma]{Proposition}
\newtheorem{corollary}[lemma]{Corollary}
\newtheorem{theorem}[lemma]{Theorem}
\theoremstyle{definition}
\newtheorem{remark}[lemma]{Remark}
\begin{document}

  \title{On effective descent $\cat V$-functors and familial descent morphisms}

\author[R. Prezado]{Rui Prezado}

\address[1]{University of Coimbra, CMUC, Department of Mathematics,
Portugal}

\email[1]{rui.prezado@student.uc.pt}

\thanks{This research was partially supported  by the Centre for Mathematics
of the University of Coimbra - UIDB/00324/2020, funded by the Portuguese
Government through FCT/MCTES, and by the grant PD/BD/150461/2019 funded by
Fundação para a Ciência e Tecnologia (FCT)}

\keywords{effective descent morphisms, enriched category, pseudopullback,
Grothendieck descent theory}

\subjclass{18N10,18F20,18D20,18B50,18A25,18A35}

\date{May 06, 2023}

  \begin{abstract}
    We study effective descent \( \cat V \)-functors for cartesian monoidal
categories \( \cat V \) with finite limits. This study is carried out
via the properties enjoyed by the 2-functor \( \cat V \mapsto \Fam(\cat V) \),
results about effective descent of bilimits of categories, and the fact that
the enrichment 2-functor preserves certain bilimits. Since these results rely
on an understanding of (effective) descent morphisms in \( \FamV \), we
carefully study these morphisms in free coproduct completions. Finally, we
provide refined conditions when \( \cat V \) is a regular category.

  \end{abstract}

  \maketitle

  \setcounter{tocdepth}{1}
  \tableofcontents

  \section*{Introduction}
    \label{sect:introduction}
    Let \( \cat C \) be a category with pullbacks. For each morphism \( p \colon
x \to y \), we have a change-of-base functor along \(p\):
\begin{equation*}
  p^* \colon \cat C/y \to \cat C/x
\end{equation*}
Via these functors, we are able to provide a description of the \textit{basic
bifibration} of \( \cat C \). Thanks to the Bénabou-Roubaud theorem
\cite{BR70} (see also \cite[p. 258]{JT94} or \cite[Theorems 7.4 and
8.5]{Luc18a} for generalisations), the \textit{descent category} for \(p\)
with respect to the basic bifibration, denoted \( \Desc(p) \), is equivalent
to the Eilenberg-Moore category for the monad induced by the adjunction \( p_!
\adj p^* \). This allows us to say that the morphism \(p\) is
\textit{effective for descent} if the comparison functor \( \mathcal K^p \) in
the Eilenberg-Moore factorisation~\eqref{eq:em.fact}
\begin{equation}
  \label{eq:em.fact}
  \begin{tikzcd}
    \cat C/y \ar[rd,"p^*",swap] \ar[rr,"\mathcal K^p"] 
      && \Desc(p) \ar[ld,"\mathcal U^p"] \\
    & \cat C/x 
  \end{tikzcd}
\end{equation}
is an equivalence of categories; here, \( \mathcal U^p \) is the functor which
forgets descent data.

Janelidze-Galois theory \cite{BJ01} and Grothendieck descent theory
\cite{JT97, Luc18a} feature the use of effective descent morphisms, requiring
the knowledge of some (or all) such morphisms in the category of interest, and
are the main motivation to undertake the study of finding sufficient
conditions or even characterising effective descent morphisms; see
\cite{JST04, JT94} for introductions to the subject.

As an example, if \( \cat C \) is a locally cartesian closed category, or an
exact category (in the sense of Barr \cite{Bar71}), the effective descent
morphisms are precisely the regular epimorphisms. However, the
characterisation of effective descent morphisms in a given category \( \cat C
\) is a notoriously difficult problem in general; for instance, see the
characterisation in \cite{RT94} and a subsequent reformulation \cite{CH02} for
the case \( \cat C = \Top \).  

Motivated by this reformulation, \cite{CH04, CH12, CH17, CJ11} study this
characterisation problem for more general notions of \textit{spaces}: these
works provide various results about effective descent in \((T, \cat
V)\)\textit{-categories} (originally defined in \cite{CT03}). Due to their
concerns with topological results, the study was restricted to the case in
which the enriching category \( \cat V \) is a quantale.

From the perspective of internal structures, we have the work of Le Creurer
\cite{Cre99}, in which he studies the problem of effective descent morphisms
for essentially algebraic structures internal to a category \( \cat B \) with
finite limits. In particular, the author provides sufficient conditions for
effective descent morphisms in \( \cat C = \CatB \), and confirms that these
conditions provide a complete characterisation with the added requirement that
\( \cat B \) is extensive and has a (regular epi, mono)-factorization system.
Generalisations of these results to internal \textit{multicategories} were
studied in \cite{PL23}.

Let \( - \pt 1 \colon \Set \to \cat V \) be the ``copower with the terminal
object'' functor, defined on objects by \( X \mapsto X \pt 1 = \sum_{x \in X}
1 \). Making use of Le Creurer's results, Lucatelli Nunes, via his study on
effective descent morphisms for bilimits of categories, provides sufficient
conditions for effective descent morphisms in \( \cat C = \VCat \) via the
following pseudopullback (see \cite[Lemma 9.10, Theorem 9.11]{Luc18a}):
\begin{equation}
  \label{eq:lucatelli.sq}
  \begin{tikzcd}
    \VCat \ar[r] \ar[d,"\ob",swap] & \CatV \ar[d,"(-)_0"] \\
    \Set \ar[r,"-\pt 1",swap] & \cat V
  \end{tikzcd}
\end{equation}
for suitable lextensive categories \( \cat V \), with the \textit{cartesian}
monoidal structure.

The central contribution of this paper is to extend \cite[Theorem
9.11]{Luc18a} to all cartesian monoidal categories \( \cat V \) with finite
limits, in Theorem~\ref{thm:main.result}. We highlight the use of the
following three tools, used in the proof of Lemma~\ref{lem:main.result}, which
are the skeleton of the argument: the properties of familial 2-functors, in
particular, of the endo-2-functor \( \Fam \colon \CAT \to \CAT \) studied in
\cite{Web07}; results about effective descent morphisms in bilimits of
categories (see \cite[Theorem 9.2 and Corollary 9.5]{Luc18a}); and
preservation of pseudopullbacks via enrichment
(Theorem~\ref{thm:enrichment.preserves.pspb}).

Since Theorem~\ref{thm:main.result} relies on understanding (effective)
descent morphisms in \( \FamV \), it naturally raises the problem of studying
these classes of epimorphisms in the free coproduct completion of \( \cat V
\).  Lemma~\ref{lem:fam.reg.epis} and Theorem~\ref{cor:fam.eff.desc} provide a
couple of improvements, which we first illustrate in
Theorem~\ref{thm:chey.cat.ef.desc} for \( \cat V \) a (co)complete Heyting
lattice (a new proof for one implication of \cite[Theorem 5.4]{CH04}), and
then we apply it to obtain the more general Theorem \ref{thm:reg.cat.desc},
providing a refinement of Theorem~\ref{thm:main.result} for regular categories
\( \cat V \).

In Section~\ref{sect:preliminaries}, we recall the notion of pseudopullback
in the restricted context of the 2-categories \( \Cat \) and \( \MndCat \), we
fix some terminology and notation for (strong) monoidal functors, used in the
proofs of the results in Section~\ref{sect:helpings}, and we recall a couple
of results from \cite{Luc18a} and \cite{Web07}, restated in a convenient form,
which are part of our toolkit in Section~\ref{sect:main}.

Section~\ref{sect:helpings} is devoted to establishing some technical results
on preservation of pseudopullbacks
(Theorem~\ref{thm:enrichment.preserves.pspb}), full faithfulness
(Lemma~\ref{lem:enrichment.preserves.ff}) by the 2-functor \( (-)\dash \Cat
\colon \MndCat \to \CAT \), and in the preservation of descent morphisms by
suitable functors (Lemma~\ref{lem:ff.left.right.adjs}), which complete our
toolkit.

As mentioned earlier, we establish our main result in Section~\ref{sect:main},
in Theorem \ref{thm:main.result}. We restate the main ideas here: if \( \cat V
\) is a cartesian monoidal category, the following composite of functors
\begin{equation}
  \label{eq:intro.refl}
  \begin{tikzcd}
    \VCat \ar[r] & \FamVCat \ar[r] & \Cat(\FamV)
  \end{tikzcd}
\end{equation}
reflects effective descent morphisms. If \( F \) is a \( \cat V \)-functor, we
let \( \tilde F \) be the value of \eqref{eq:intro.refl} at \(F\). Then, if we
denote by \( \tilde F^n \) the underlying morphism in \( \FamV \) on the
objects of \(n\)-tuples of composable morphisms, we have that if
\begin{itemize}[label=--]
  \item
    \( \tilde F^1 \) is an effective descent morphism in \( \FamV \),
  \item
    \( \tilde F^2 \) is a descent morphism in \( \FamV \),
  \item
    \( \tilde F^3 \) is an almost descent morphism in \( \FamV \),
\end{itemize}
then \( \tilde F \) is an effective descent morphism in \( \Cat(\FamV) \) by
\cite[Proposition 3.3]{Cre99}, and by reflection, \( F \) is an effective
descent morphism in \( \VCat \).

Indeed, these conditions on \(F\) are statements about (effective) (almost)
descent morphisms in \( \FamV \), leading us to studying such morphisms in the
coproduct completion of \( \cat V \). We devote Section~\ref{sect:examples} to
provide tractable descriptions of these classes of epimorphisms, with an
illustrative application to categories enriched in (co)complete Heyting
lattices. We obtain Theorem~\ref{thm:reg.cat.desc}, which refines
Theorem~\ref{thm:main.result} for regular categories, with further
simplifications for infinitary coherent categories, exact categories or
locally cartesian closed categories. 

In Section \ref{sect:examples.pt2}, we provide some brief remarks regarding 
categories enriched in cartesian monoidal categories. We instantiate our
results when the enriching category \( \cat V \) is
\begin{itemize}[label=--]
  \item
    the category \( \CHaus \) of compact Hausdorff spaces,
  \item
    the category \( \Stn \) of Stone spaces,
  \item
    a (Grothendieck) topos.
\end{itemize}

Finally, we have a couple of concluding remarks in
Section~\ref{sect:future.work}, where we sketch some possible lines of future
research, with regard to extending the result to all symmetric monoidal
categories, or to generalized multicategories. 

\subsection*{Acknowledgments}

The author is deeply grateful to Fernando Lucatelli Nunes and Maria Manuel
Clementino for their helpful comments regarding this work. The valuable
comments of the anonymous referee were also very appreciated.

  \section{Preliminaries}
    \label{sect:preliminaries}
    The purpose of this section is to give a concise summary of the terminology
and notation used for our main results. We begin by reviewing the notion of
\textit{pseudopullbacks}, as our treatment of effective descent morphisms
employs the techniques of \cite{Luc18a} on commutativity of bilimits.
Then, we recall the notion of \textit{free coproduct completion} of a
category, an important tool for our main insight. Finally, we fix some
notation regarding \textit{monoidal} and \textit{enriched} categories.

\subsection{Pseudopullbacks and 2-pullbacks:}

Let \( F \colon \cat C \to \cat E \) and \( G \colon \cat D \to \cat E \) be
functors. The \textit{pseudopullback} of \(F,G\), denoted by \( \PsPb(F,G) \)
may be succinctly defined as \textit{the full subcategory} of the comma
category \( (F \downarrow G) \) whose objects are \textit{isomorphisms}. To be
explicit, \( \PsPb(F,G) \) has
\begin{itemize}[label=--]
  \item
    objects given by isomorphisms \( \xi \colon Fc \iso Gd \), where \( c
    \in \cat C \) and \( d \in \cat D \),
  \item
    morphisms \( (\zeta \colon Fa \to Gb) \to (\xi \colon Fc \to Gd) \) given
    by a pair of morphisms \(f \colon a \to c \) and \( g \colon b \to d \)
    such that \( \xi \circ Ff = Gg \circ \zeta \).
  \item
    identities and composition given componentwise from \( \cat C \) and \(
    \cat D \).
\end{itemize}
The \textit{2-pullback} of \( F, G \) is simply the ordinary pullback in the
underlying category \( \CAT \). It may also be seen as the full subcategory of
\( \PsPb(F,G) \) whose objects are the identity morphisms; these are
determined by pairs \( c \in \cat C \), \( d \in \cat D \) such that \( Fc =
Gd \).

We remark that 2-pullbacks and pseudopullbacks are far from being equivalent
in general. We consider the following cospan:
\begin{equation}
  \label{eq:cospn}
  \begin{tikzcd}
    1 \ar[r,"x"] & (x \iso y) & 1 \ar[l,"y",swap]
  \end{tikzcd}
\end{equation}
where \( 1 \) is the terminal category,  and \( (x \iso y) \) is a category
with two isomorphic, but distinct, objects \(x, y\), identified by the
functors \( x,\,y  \colon 1 \to (x \iso y) \). The 2-pullback is the empty
category, since \( x \neq y \), while the pseudopullback is the discrete
category whose objects are the isomorphisms from \( x \) to \( y \) in \( (x
\iso y) \).

Regarding descent theory, we recall the following result of Lucatelli Nunes:
\begin{proposition}[{\cite[Corollary 9.6]{Luc18a}}]
  \label{prop:pspb.descent}
  Suppose Diagram~\eqref{eq:pb.cat.sq} below is a pseudopullback of
  categories with pullbacks and pullback-preserving functors
  \begin{equation}
    \label{eq:pb.cat.sq}
    \begin{tikzcd}
      \cat A \ar[r,"F"] \ar[d,"H",swap] 
        & \cat B \ar[d,"K"] \\
      \cat C \ar[r,"G",swap] & \cat D
    \end{tikzcd}
  \end{equation}
  Let \( f \) be a morphism in \( \cat A \). If
  \begin{itemize}[label=--]
    \item
      \( Ff \) is effective for descent,
    \item
      \( Hf \) is effective for descent,
    \item
      \( KFf \iso GHf \) is a descent morphism,
  \end{itemize}
  then \(f\) is effective for descent.
\end{proposition}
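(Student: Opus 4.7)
The plan is to apply the Bénabou-Roubaud theorem so that effective descent for $f \colon x \to y$ in $\cat A$ becomes monadicity of the change-of-base functor $f^* \colon \cat A/y \to \cat A/x$, and then to exploit the fact that slicing converts the given pseudopullback into a pseudopullback at the level of slice categories.

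Concretely, the first step would be to show that, for each $y \in \cat A$ (corresponding to an isomorphism $\xi_y \colon GHy \iso KFy$), slicing yields a canonical equivalence between $\cat A/y$ and $\PsPb(\bar G, \bar K)$, where $\bar G \colon \cat C/Hy \to \cat D/GHy$ and $\bar K \colon \cat B/Fy \to \cat D/KFy$ are the induced slice functors, identified along $\xi_y$; this uses pullback-preservation of $G$ and $K$ together with the universal property of pseudopullbacks. The next step would be to verify that, under this equivalence, the change-of-base $f^*$ along $f \colon x \to y$ corresponds to the functor induced on pseudopullback slices by $(Ff)^*$, $(Hf)^*$, and pullback on $\cat D$-slices along $KFf \iso GHf$.

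At that point the three hypotheses on $f$ translate, respectively, into $(Ff)^*$ and $(Hf)^*$ being monadic and the connecting $\cat D$-slice functor being of descent type. I would then conclude either by appealing to the general monadicity-for-pseudopullbacks statement that underlies \cite[Theorem 9.2]{Luc18a}, or, more directly, by checking that the Eilenberg-Moore comparison $\mathcal K^f$ is fully faithful and essentially surjective through gluing of descent data across the connecting iso on the $\cat B$- and $\cat C$-sides.

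The main obstacle, as is typical when combining monadicity with pseudopullbacks, will be this gluing step: a pseudopullback of monadic functors is not automatically monadic, and one needs a Beck-Chevalley-type compatibility to control the creation of coequalizers in the constructed comparison. The asymmetry in the hypotheses (effective descent on the $\cat B$- and $\cat C$-sides, but only descent downstairs on $\cat D$) is precisely what the argument requires: essential surjectivity of $\mathcal K^f$ reduces to effective descent at $\cat B$ and $\cat C$, while full faithfulness needs only the weaker descent condition at $\cat D$ to force the glued data to agree up to unique coherent isomorphism.
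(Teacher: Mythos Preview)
The paper does not prove this proposition: it is listed in the preliminaries as a result recalled from \cite[Corollary~9.6]{Luc18a}, with no argument supplied. So there is no in-paper proof to compare against.

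That said, your strategy is essentially the one behind the cited result. In \cite{Luc18a} the argument is phrased directly in terms of descent categories rather than via B\'enabou--Roubaud and monadicity: one shows that the assignment $p \mapsto \Desc(p)$, together with the comparison functors $\mathcal K^p$, commutes with the relevant bilimits, so that $\Desc(f)$ is (equivalent to) the pseudopullback of $\Desc(Ff)$ and $\Desc(Hf)$ over $\Desc(KFf)$, and likewise $\mathcal K^f$ is the induced map on pseudopullbacks. The asymmetric hypotheses then play exactly the role you describe: effective descent on the $\cat B$- and $\cat C$-legs gives equivalences $\mathcal K^{Ff}$, $\mathcal K^{Hf}$, while descent on the $\cat D$-leg makes $\mathcal K^{KFf}$ fully faithful, and a pseudopullback of two equivalences over a fully faithful functor is an equivalence. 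Your slicing-then-monadicity route reaches the same endpoint, but the descent-category formulation avoids the detour through Eilenberg--Moore algebras and the Beck--Chevalley verification you flag as the main obstacle; in particular, the ``gluing step'' you worry about is absorbed into the purely formal statement that pseudopullbacks of this shape preserve equivalences.
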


\subsection{Free coproduct completion of a category:}
\label{subsect:fam}

Let \( \cat V \) be a category. The \textit{free coproduct completion} of \(
\cat V \), also known as the category of \textit{families of objects in} \(
\cat V \), is denoted by \( \FamV \). Its objects are set-indexed families 
\begin{equation*}
  (X_j)_{j \in J}
\end{equation*}
of objects \( X_j \) in \( \cat V \), and a morphism 
\begin{equation}
  \label{eq:fam.morph}
  (X_j)_{j \in J} \to (Y_k)_{k \in K} 
\end{equation}
consists of a pair \( (f,\phi) \) where \( f \colon J \to K \) is a function
on the underlying sets, and 
\begin{equation}
  \label{eq:fam.fam.morph}
  \phi = (\phi_j \colon X_j \to Y_{fj})_{j \in J} 
\end{equation}
is a set-indexed family of morphisms \( \phi_j \) in \( \cat V \). We refer to
\cite{Bén85, CLW93, BJ01} for more thorough introductions to this concept.
Here, we simply recall that the ``underlying set'' functor \( \FamV \to \Set
\) is a \textit{fibration}, and that we have a canonical, fully faithful
functor \( \eta_{\cat V} \colon \cat V \to \FamV \) identifying the
one-element families. More importantly, we recall the following observation of
Weber:

\begin{proposition}[{\cite[Proposition 5.15]{Web07}}]
  \label{prop:fam.unit.cartesian}
  The canonical, fully faithful functors \( \eta_{\cat V} \colon \cat V \to
  \Fam(\cat V) \) form a 2-natural cartesian transformation, that is, 
  for all functors \(F\colon \cat V \to \cat W\), Diagram~\eqref{eq:2pb.eta}
  below is a 2-pullback.
  \begin{equation}
    \label{eq:2pb.eta}
    \begin{tikzcd}
      \cat V \ar[d,"F",swap] \ar[r,"\eta_{\cat V}"]
        & \Fam(\cat V) \ar[d,"\Fam(F)"] \\
      \cat W \ar[r,"\eta_{\cat W}",swap]
        & \Fam(\cat W)
    \end{tikzcd}
  \end{equation}
\end{proposition}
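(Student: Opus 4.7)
The plan is to verify directly that the induced comparison functor $K \colon \cat V \to P$ into the strict pullback $P$ of the cospan $\cat W \xrightarrow{\eta_{\cat W}} \Fam(\cat W) \xleftarrow{\Fam(F)} \Fam(\cat V)$ is an isomorphism of categories. Since a 2-pullback in $\CAT$ of 1-categories coincides with the ordinary (strict) pullback, this suffices. The comparison sends $v \in \cat V$ to the pair $(Fv, \eta_{\cat V}(v))$, which is well-typed because $\Fam(F) \circ \eta_{\cat V} = \eta_{\cat W} \circ F$ holds strictly, this being the (strict) 2-naturality of $\eta$.

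Unpacking $\Fam$, an object $x \in \Fam(\cat V)$ is a family $(v_i)_{i \in I}$ indexed by a set $I$, and $\Fam(F)$ acts by $(v_i)_{i \in I} \mapsto (Fv_i)_{i \in I}$ on the \emph{same} index set. Likewise $\eta_{\cat W}(c) = (c)_{i \in \{\ast\}}$. The strict equality $\Fam(F)(x) = \eta_{\cat W}(c)$ therefore forces $I = \{\ast\}$ and $Fv_\ast = c$. Hence every object of $P$ is uniquely of the form $K(v) = (Fv, \eta_{\cat V}(v))$ for some $v \in \cat V$, which is the required bijection on objects.

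For morphisms, a morphism $\eta_{\cat V}(v_1) \to \eta_{\cat V}(v_2)$ in $\Fam(\cat V)$ has the unique function $\{\ast\} \to \{\ast\}$ as its underlying reindexing and is thus determined by a single morphism $\phi \colon v_1 \to v_2$ in $\cat V$; its image under $\Fam(F)$ is encoded by $F\phi$. The pullback compatibility $\eta_{\cat W}(g) = \Fam(F)(\phi)$ then forces $g = F\phi$. Thus every morphism in $P$ between $K(v_1)$ and $K(v_2)$ is of the form $K(\phi)$ for a unique $\phi$, giving the bijection on morphisms; preservation of identities and composition follows from functoriality of $\eta_{\cat V}$ and $F$.

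I do not expect any real obstacle here: the argument is essentially unwinding the free-coproduct-completion description of $\Fam$ together with the fact that $\Fam(F)$ leaves the indexing set intact. The one subtlety worth emphasising is the use of \emph{strict} equality in the 2-pullback, which is what allows us to conclude that the index set is \emph{literally} $\{\ast\}$ rather than merely in bijection with it; the 2-dimensional aspect (that the hom-categories pull back correctly, making $K$ a 2-functor) is vacuous since $\cat V$ and $\Fam(\cat V)$ are 1-categories viewed as locally discrete 2-categories.
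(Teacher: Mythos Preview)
Your proof is correct. The paper does not give its own proof of this proposition; it simply quotes it as \cite[Proposition 5.15]{Web07} in the preliminaries, so there is nothing to compare against. Your direct verification---showing that the strict pullback $P$ of $\eta_{\cat W}$ along $\Fam(F)$ is isomorphic to $\cat V$ by unwinding the definitions of $\Fam$ and $\eta$---is exactly the elementary argument one would expect, and the point you flag about strict equality forcing the index set to be literally $\{\ast\}$ is the only step that requires care.
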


A final remark on notation: for a morphism \eqref{eq:fam.morph} in \( \FamV \)
with \( K \iso 1 \), the function on the underlying sets is uniquely
determined. In this case, the underlying family \eqref{eq:fam.fam.morph} of
morphisms in \( \cat V \) is sufficient to determine the morphism in \( \FamV
\). For this reason, we simply denote such morphisms as \( \phi \colon
(X_j)_{j \in J} \to Y \).

\subsection{Monoidal categories:}

Let \( \cat V = (\cat V, \otimes, I) \) and \( \cat W = (\cat W, \otimes, I)
\) be monoidal categories, whose coherence isomorphisms we omit, as they play
no role in what follows. We recall that a \textit{monoidal functor} \( F
\colon \cat V \to \cat W \) consists of a functor \( F \) between the
underlying categories, preserving the unit object and tensor products only
up-to-isomorphism. This means that we have 
\begin{itemize}[label=--]
  \item
    an isomorphism \( \e F \colon I \to FI \), the \textit{unit comparison}
    morphism,
  \item
    and an isomorphism \( \m F \colon Fx \otimes Fy \to F(x \otimes y) \), for
    each pair \(x, y \) of objects, the \textit{tensor comparison} morphism
\end{itemize}
satisfying naturality and coherence conditions (see \cite[p. 1889]{Bén63}). 

We denote by \( \MndCat \) the 2-category of monoidal categories, monoidal
functors, and their natural transformations \cite[p. 474]{EK66}.  We further
highlight that the forgetful 2-functor \( \MndCat \to \Cat \) is pseudomonadic
\cite[Section 3.1]{Lei04}, \cite[Remark 4.3]{Luc18b}, and therefore it creates
bilimits. In particular, the underlying category of the pseudopullback of a
cospan of monoidal functors is the pseudopullback of the underlying ordinary
functors, and fully faithful morphisms in \( \MndCat \) are precisely those
monoidal functors whose underlying functor is fully faithful in \( \Cat \). 

We have a 2-functor \( (-)\dash \Cat \colon \MndCat \to \CAT \), mapping each
monoidal category \( \cat V \) to the category \( \VCat \) of small \( \cat V
\)-categories, and for each monoidal functor \( F \colon \cat V \to \cat W \), 
we have the direct image functor \( F_! \colon \VCat \to \WCat \). On a small
\( \cat V \)-category \( \cat C \), the \( \cat W \)-category \( F_!\cat C \)
has the same underlying set of objects, and for each pair of objects \(x,y\)
in \( \cat C \), \( (F_!\cat C)(x,y) = F\cat C(x,y) \). 

Regarding notation, we will denote the \textit{unit and composition morphisms}
of a \( \cat V \)-category \( \cat C \) by \( \uu{\cat C} \colon I \to \cat
C(x,x) \) for each object \( x \) in \( \cat C \), and \( \cc{\cat C} \colon
\cat C(y,z) \otimes \cat C(x,y) \to \cat C(x,z) \) for each triple \( x,y,z \)
of objects in \( \cat C \). The unit and composition morphisms for \( F_!\cat
C \) are given by
\begin{equation*}
  \uu{F_!\cat C} = F\uu{\cat C} \circ \e F
  \qquad \text{and} \qquad
  \cc{F_!\cat C} = F\cc{\cat C} \circ \m F.
\end{equation*}

As it will prove to be convenient, we fix the following notation of composable
pairs and triples of hom-objects of a \( \cat V \)-category \( \cat C \): 
\begin{itemize}[label=--]
  \item
    \( \cat C(x,y,z) = \cat C(y,z) \otimes \cat C(x,y) \), 
  \item
    \( \cat C(w,x,y,z) = \cat C(x,y,z) \otimes \cat C(w,x) \).
\end{itemize}
For example, the composition morphism of \( \cat C \) may be written as \(
\cc{\cat C} \colon \cat C(x,y,z) \to \cat C(x,z) \). We may also write \( \m F
\colon (F_!\cat C)(x,y,z) \to F(\cat C(x,y,z)) \) for the tensor comparison
isomorphism when hom-objects are concerned.  Analogously, we may define 
\begin{equation}
  \Phi_{x,y,z} = \Phi_{y,z} \otimes \Phi_{x,y} 
    \colon \cat C(x,y,z) \to \cat D(\Phi x, \Phi y, \Phi z) 
\end{equation}
for a \( \cat V \)-functor \( \Phi \colon \cat C \to \cat D \).

  \section{Preservation of bilimits and descent}
    \label{sect:helpings}
    The following result, present in a more general form in \cite{FL22}, is a
helpful, labour-saving device in our work with pseudopullbacks:

\begin{theorem}
  \label{thm:enrichment.preserves.pspb}
  The enrichment 2-functor \( (-)\dash \Cat \colon \MndCat \to \CAT \) 
  preserves pseudopullbacks.
\end{theorem}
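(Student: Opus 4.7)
Let $\cat P \coloneqq \PsPb(F,G)$ for a cospan $F \colon \cat V \to \cat E$, $G \colon \cat W \to \cat E$ in $\MndCat$, and let $F_*$ and $G_*$ denote the induced 2-functors $\cat V\dash\Cat \to \cat E\dash\Cat$ and $\cat W\dash\Cat \to \cat E\dash\Cat$. The plan is to construct a comparison 2-functor $\Phi \colon \cat P \dash \Cat \to \PsPb(F_*, G_*)$ and prove it is an equivalence. Using the fact that $\MndCat \to \Cat$ creates bilimits (as noted in the preliminaries), the objects of $\cat P$ are triples $(v,w,\xi)$ with $\xi \colon Fv \iso Gw$, its tensor is computed componentwise with $\xi$-component induced by $\m F$ and $\m G$, and its unit is $(I_{\cat V}, I_{\cat W}, \e G \circ (\e F)^{-1})$. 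The strict-monoidal projections of $\cat P$, together with the canonical iso of the pseudopullback square, induce $\Phi$ via the universal property of $\PsPb(F_*, G_*)$.

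Explicitly, a $\cat P$-category $\cat C$ has hom-objects of the form $\cat C(x,y) = (\cat A(x,y), \cat B(x,y), \xi_{x,y})$, which project to a $\cat V$-category $\cat A$ and a $\cat W$-category $\cat B$ sharing $\ob \cat C$, while the family $(\xi_{x,y})$ assembles into an $\cat E$-functor isomorphism $\phi \colon F_*\cat A \iso G_*\cat B$; one sets $\Phi(\cat C) \coloneqq (\cat A, \cat B, \phi)$ and defines $\Phi$ on morphisms analogously. For essential surjectivity, I would start from $(\cat A, \cat B, \phi)$ in $\PsPb(F_*, G_*)$, unpack $\phi$ as a bijection $\phi_0 \colon \ob \cat A \iso \ob \cat B$ together with hom-object isos $\phi_{x,y} \colon F(\cat A(x,y)) \iso G(\cat B(\phi_0 x, \phi_0 y))$ intertwining the unit and composition of the transported $\cat E$-categories, and then reassemble this into a $\cat P$-category $\cat C$ with $\ob \cat C = \ob \cat A$, hom-objects $(\cat A(x,y), \cat B(\phi_0 x, \phi_0 y), \phi_{x,y})$, and units and compositions inherited componentwise from $\cat A$ and $\cat B$. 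The condition that these pairs genuinely define morphisms in $\cat P$ is precisely the $\cat E$-functoriality of $\phi$, so $\Phi(\cat C) \iso (\cat A, \cat B, \phi)$. Full faithfulness is parallel: a $\cat P$-functor $\cat C \to \cat C'$ decomposes into a $\cat V$-functor between the $\cat A$-components and a $\cat W$-functor between the $\cat B$-components, made compatible with $\phi$ and $\phi'$, which is exactly a morphism in $\PsPb(F_*, G_*)$.

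The main obstacle is the equational bookkeeping: once $\xi$-compatibility is imposed, one must verify that the unit and associativity diagrams of a $\cat P$-enriched category split into their $\cat V$- and $\cat W$-counterparts. This reduces to applying the universal property of the pseudopullback pointwise to each hom-object and tracking the monoidal coherence data $\e F, \m F, \e G, \m G$, but no new ideas are required beyond the creation of bilimits by $\MndCat \to \Cat$.
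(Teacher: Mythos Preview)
Your proposal is correct and follows essentially the same route as the paper: both arguments establish the equivalence by taking an object $(\cat A,\cat B,\phi)$ of $\PsPb(F_*,G_*)$, unpacking $\phi$ into an object bijection together with hom-object isomorphisms $\phi_{x,y}$, and assembling a $\PsPb(F,G)$-category whose hom-objects are these $\phi_{x,y}$, with unit and composition given by the pairs $(\uu{\cat A},\uu{\cat B})$ and $(\cc{\cat A},\cc{\cat B})$, verified to be morphisms in $\PsPb(F,G)$ precisely via the $\cat E$-functoriality of $\phi$ and the monoidal data $\e F,\m F,\e G,\m G$. The paper presents this a bit more directly (without naming the comparison functor) and draws the two commuting diagrams explicitly, but the content of your essential-surjectivity and full-faithfulness steps matches it.
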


\begin{proof}
  Let \( F \colon \cat U \to \cat W \) and \( G \colon \cat V \to \cat W \) be
  monoidal functors between monoidal categories.

  We desire to confirm that \( \PsPb(F, G)\dash \Cat \eqv \PsPb(F_!, G_!) \);
  let \( \Phi \colon F_!\cat B \iso G_!\cat C \) be an isomorphism of \( \cat W
  \)-categories, where \( \cat B \) is a \( \cat U \)-category and \( \cat C
  \) is a \( \cat V \)-category. We define a \( \PsPb(F,G) \)-category \( \cat
  D_\Phi \) with
  \begin{itemize}[label=--]
    \item
      set of objects given by \( \ob \cat D_\Phi = \ob \cat B \),
    \item
      hom-object given by \( \cat D_\Phi(x,y) = \Phi_{x,y} \colon F\cat B(x,y)
      \iso G\cat C(\Phi x,\Phi y) \) at \(x,y \in \ob \cat D_\Phi \),
    \item
      unit object and composition given by the pairs \( (\uu{\cat B}, \uu{\cat
      C}) \), \( (\cc{\cat B}, \cc{\cat C}) \) of the respective unit objects
      and compositions from \( \cat B \) and \( \cat C \); these pairs are
      well-defined morphisms of \( \PsPb(F,G) \), since \(F,G\) are monoidal
      functors and \( \Phi \) is a \( \cat W \)-functor.
  \end{itemize}
  To be more precise on this last point, we note that the following diagrams
  commute:
  \begin{equation}
    \begin{tikzcd}
      && I_{\cat W} \ar[dll,"\e F",swap] \ar[drr,"\e G"] 
                    \ar[ddl,"\uu{F_!\cat B}",swap] 
                    \ar[ddr,"\uu{G_!\cat C}"] \\
      FI_{\cat U} \ar[rd,"F\uu{\cat B}",swap]
        &&&& GI_{\cat V} \ar[ld,"G\uu{\cat C}"] \\
      & F\cat B(x,x) \ar[rr,"\Phi_{x,x}",swap]
        && G\cat C(\Phi x,\Phi x)
    \end{tikzcd}
  \end{equation}
  \begin{equation}
    \begin{tikzcd}[column sep=large]
      F(\cat B(x,y,z)) \ar[d,"F\cc{\cat B}",swap]
      & (F_!\cat B)(x,y,z) 
        \ar[l,"\m F",swap] \ar[r,"\Phi_{x,y,z}"] 
        \ar[ld,"\cc{F_!\cat B}" description]
      & (G_!\cat C)(\Phi x,\Phi y,\Phi z) 
        \ar[d,"\m G"] \ar[ld,"\cc{G_!\cat C}" description] \\
      F\cat B(x,z) \ar[r,"\Phi_{x,z}",swap]
      & G\cat C(\Phi x, \Phi z)
      & G(\cat C(\Phi x, \Phi y,\Phi z))
          \ar[l,"G\cc{\cat C}"] 
    \end{tikzcd}
  \end{equation}
  Since identity and associativity laws of \( \cat D_\Phi \) are precisely
  those of \( \cat B \) and \( \cat C \), it follows that \( \cat D_\Phi \) is
  indeed well-defined. 

  The underlying \( \cat U \)-category of \( \cat D_\Phi \) is \( \cat B \)
  itself, while its underlying \( \cat V \)-category is isomorphic to \( \cat
  C \): it is given by \( \ob \Phi \) on the sets of objects, and identity on
  the hom-objects.

  Moreover, let \( \cat X \), \( \cat Y \) be \(\PsPb(F,G)\)-categories, and
  let \( H \colon \cat X_{\cat U} \to \cat Y_{\cat U} \) be a \( \cat U
  \)-functor and \( K \colon \cat X_{\cat V} \to \cat Y_{\cat V} \) be a \(
  \cat V \)-functor between the underlying \( \cat U \)-categories and \( \cat
  V \)-categories of \( \cat X \) and \( \cat Y \) respectively, such that
  \( \ob H = \ob K \) and
  \begin{equation}
    \label{eq:why.why.why}
    GK_{x,y} \circ \cat X(x,y) = \cat Y(Hx,Hy) \circ FH_{x,y}. 
  \end{equation}
  We note that there exists a unique \( \PsPb(F,G) \)-functor \( \Phi \colon
  \cat X \to \cat Y \) with underlying \( \cat U \)-functor and \( \cat V
  \)-functor given by \(H\) and \(K\), respectively. Indeed, \( \Phi \colon
  \cat X \to \cat Y \) is given as follows:
  \begin{itemize}[label=--]
    \item
      \( \ob \Phi = \ob H \),
    \item
      \( \Phi_{x,y} \) is given by the pair \( H_{x,y} \), \( K_{x,y} \),
      which is a morphism 
      \begin{equation*}
        (\cat X(x,y) \colon F\cat X_{\cat U}(x,y) 
                              \iso G\cat X_{\cat V}(x,y)) 
          \to (\cat Y(Hx,Hy) \colon F\cat Y_{\cat U}(Hx,Hy) 
                                      \iso G\cat Y_{\cat V}(Hx,Hy))
      \end{equation*}
      in \( \PsPb(F,G) \), due to \eqref{eq:why.why.why}.
  \end{itemize}
  The laws that make \( \Phi \) into a \( \PsPb(F,G) \)-functor are precisely
  given by the laws that make \(H\) into a \( \cat U \)-functor and \( K \)
  into a \( \cat V \)-functor.

  If \( \Psi \colon \cat X \to \cat Y \) is a \( \PsPb(F,G) \)-functor with
  \(H\) as underlying \( \cat U \)-functor and \( K \) as underlying \( \cat V
  \)-functor, we necessarily get \( \Phi = \Psi \) by comparing their
  hom-morphisms.
\end{proof}

\begin{lemma}
  \label{lem:enrichment.preserves.ff}
  The enrichment 2-functor \( (-)\dash \Cat \colon \MndCat \to \CAT \)
  preserves fully faithful functors.
\end{lemma}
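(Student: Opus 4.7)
The plan is to verify directly that for a fully faithful monoidal functor $F \colon \cat U \to \cat V$, the induced 2-functor $F_! \colon \UCat \to \VCat$ is fully faithful on hom-categories; since the preamble already records that fully faithful morphisms in $\MndCat$ are exactly those whose underlying functor is fully faithful in $\Cat$, it suffices to work with the underlying $F$. Fix $\cat U$-categories $\cat A, \cat B$ and consider the comparison map $\UCat(\cat A,\cat B) \to \VCat(F_!\cat A, F_!\cat B)$ sending $H$ to $F_!H$; I will show it is a bijection on objects (i.e.\ on $\cat U$-functors) and on $\cat U$-natural transformations.

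For faithfulness, two $\cat U$-functors $H, H'$ with $F_!H = F_!H'$ agree on objects (as $F_!$ is the identity on objects) and satisfy $FH_{x,y} = FH'_{x,y}$ on hom-morphisms, whence $H_{x,y} = H'_{x,y}$ by faithfulness of $F$. For fullness, given a $\cat V$-functor $K \colon F_!\cat A \to F_!\cat B$, set $\ob H \coloneqq \ob K$, and use fullness of $F$ to obtain, for each pair $x,y$, a unique morphism $H_{x,y} \colon \cat A(x,y) \to \cat B(Kx,Ky)$ with $FH_{x,y} = K_{x,y}$. The same bijection treats $\cat U$-natural transformations (whose data is again a family of morphisms between hom-objects).

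The only real content is then that this candidate $H$ genuinely satisfies the $\cat U$-functor axioms. For the unit axiom, the unit of $F_!\cat A$ at $x$ is $F\uu{\cat A} \circ \e F$, so the unit law for $K$ reads $FH_{x,x} \circ F\uu{\cat A} \circ \e F = F\uu{\cat B} \circ \e F$; cancelling the isomorphism $\e F$ and applying faithfulness of $F$ yields $H_{x,x} \circ \uu{\cat A} = \uu{\cat B}$. For composition, the composition morphism of $F_!\cat A$ is $F\cc{\cat A} \circ \m F$; the composition law for $K$, combined with naturality of $\m F$, gives
\begin{equation*}
F(H_{x,z} \circ \cc{\cat A}) \circ \m F = F(\cc{\cat B} \circ (H_{y,z} \otimes H_{x,y})) \circ \m F,
\end{equation*}
and again invertibility of $\m F$ together with faithfulness of $F$ pulls this back to the required equality in $\cat U$. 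The same pattern (cancel the iso coming from $F$'s monoidal structure, then apply faithfulness) handles the $\cat U$-naturality condition for transformations.

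The main obstacle, and genuinely the only non-trivial point, is recognising that $F$ being fully faithful on underlying categories, together with the invertibility of $\e F$ and $\m F$ built into the definition of a monoidal functor, is exactly what is needed to reflect the enrichment axioms. Once this observation is in place, no further coherence manipulation is required.
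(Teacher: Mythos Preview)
Your proposal is correct and follows essentially the same route as the paper: use full faithfulness of $F$ to produce the unique candidate hom-morphisms, then verify the unit and composition axioms by cancelling the invertible structure maps $\e F$, $\m F$ and invoking faithfulness of $F$. One small remark: the paper treats $\VCat$ merely as an object of $\CAT$, so only the bijection on $\cat V$-functors is required; your additional paragraph on $\cat U$-natural transformations is unnecessary here, and your parenthetical description of their data as ``a family of morphisms between hom-objects'' is not quite right (the components are morphisms $I \to \cat B(Hx,H'x)$ out of the unit), though this does not affect the argument you actually need.
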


\begin{proof}
  Let \( F \colon \cat V \to \cat W \) be a fully faithful monoidal functor.
  To prove \( F_! \colon \VCat \to \WCat \) is fully faithful, let \( \cat C,
  \cat D \) be \( \cat V \)-categories, and let \( \Psi \colon F_!\cat C \to
  F_!\cat D \) be a \( \cat W \)-functor. It consists of the following
  data:
  \begin{itemize}[label=--]
    \item
      A function \( \Psi \colon \ob \cat C \to \ob \cat D \),
    \item
      A morphism \( \Psi_{x,y} \colon F\cat C(x,y) \to F\cat D(\Psi x, \Psi y)
      \) for each pair \(x,y \in \ob \cat C \).
  \end{itemize}
  Since \(F\) is fully faithful, there exists a unique \( \Phi_{x.y} \colon
  \cat C(x,y) \to \cat D(\Psi x, \Psi y) \) such that \( F\Phi_{x,y} =
  \Psi_{x,y} \).

  With this, we define a \( \cat V \)-functor \( \Phi \colon \cat C \to \cat D
  \) given
  \begin{itemize}[label=--]
    \item
      on objects by the function \( \Phi = \Psi \colon \ob \cat C \to \ob \cat D \),
    \item
      on morphisms by \( \Phi_{x,y} \colon \cat C(x,y) \to \cat D(\Phi x, \Phi y)
      \) for each pair \( x, y \in \ob \cat C \).
  \end{itemize}
  This is a \( \cat V \)-functor: we note that the following diagrams commute
  \begin{equation}
    \begin{tikzcd}
      & I \ar[d,"\e F"] \ar[ddl,bend right,"u",swap] 
                       \ar[ddr,bend left,"u"] \\
      & FI \ar[ld,"Fu",swap] \ar[rd,"Fu"] \\
      FC(x,y) \ar[rr,"F\Phi_{x,y}",swap] && F\cat D(\Phi x,\Phi y)
    \end{tikzcd}
  \end{equation}
  \begin{equation}
    \begin{tikzcd}[column sep=large]
      (F_!\cat C)(x,y,z) \ar[r,"(F_!\Phi)_{x,y,z}"] 
                         \ar[d,"\m F",swap]
        & (F_!\cat D)(\Phi x, \Phi y, \Phi z) 
          \ar[d,"\m F"] \\
      F(\cat C(x,y,z)) \ar[r,"F\Phi_{x,y,z}"]
          \ar[d,"Fc",swap]
        & F(\cat D(\Phi x, \Phi y, \Phi z))
          \ar[d,"Fc"] \\
      F\cat C(x,z) \ar[r,"F \Phi_{x,z}",swap]
        & F\cat D(\Phi x, \Phi z)
    \end{tikzcd}
  \end{equation}
  so, by the full faithfulness of \(F\), plus invertibility of \( \e F \) and
  \( \m F \), we confirm that \( \Phi \) is a \( \cat V \)-functor. Moreover,
  by definition, it is the unique \( \cat V \)-functor such that \(F_!\Phi =
  \Psi \), which concludes our proof.
\end{proof}

\begin{lemma}
  \label{lem:ff.left.right.adjs}
  Given a string of adjoint functors \( L \adj F \adj R \) between categories
  with finite limits, if \( L \) (and therefore \( R \)) is fully faithful,
  then \(F\) preserves descent morphisms.
\end{lemma}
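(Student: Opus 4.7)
The plan is to transfer pre-monadicity of $p^\ast \colon \cat C/y \to \cat C/x$ to that of $(Fp)^\ast \colon \cat D/Fy \to \cat D/Fx$, exploiting that $F$ preserves all finite limits and colimits (having both adjoints).

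Since $F$ preserves pullbacks, the sliced functors $F^y \colon \cat C/y \to \cat D/Fy$ and $F^x \colon \cat C/x \to \cat D/Fx$ intertwine the change-of-base: $F^x \circ p^\ast \iso (Fp)^\ast \circ F^y$ and $F^y \circ p_! \iso (Fp)_! \circ F^x$. Slicing the adjoint string $L \adj F \adj R$ over $y$ (respectively $x$) yields $L^y \adj F^y \adj R^y$; I would verify that the outer adjoints remain fully faithful, which reduces to the pointwise identities $FL \iso \id_{\cat D}$ and $FR \iso \id_{\cat D}$ encoded by the hypotheses on $L$ and $R$. In particular, $F^y R^y \iso \id_{\cat D/Fy}$, so every object of $\cat D/Fy$ is isomorphic to $F^y(a)$ for some $a \in \cat C/y$; likewise for $\cat D/Fx$.

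With this essential surjectivity in hand, the plan is to show that the comparison $\mathcal K^{Fp}$ is fully faithful by testing on objects in the image of $F^y$. Given $a_1, a_2 \in \cat C/y$, the hom-set in $\cat D/Fy$ between $F^y a_1$ and $F^y a_2$, and the corresponding hom-set in $\Desc(Fp)$, each admit expressions in terms of hom-sets in $\cat C/y$ and $\Desc(p)$ via the sliced adjunctions and the intertwining; the hypothesis that $\mathcal K^p$ is fully faithful (equivalent to $p$ being a descent morphism) then supplies the required bijection.

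The main obstacle is the last step: establishing a natural bijection between descent data on $F^y(a)$ in $\Desc(Fp)$ and descent data on $a$ in $\Desc(p)$. This compatibility should flow from the intertwining and preservation by $F$ of the pullbacks and equalisers witnessing descent data, but requires careful bookkeeping of the coherence conditions. A convenient fallback, if this direct calculation becomes unwieldy, is to apply Proposition~\ref{prop:pspb.descent} to an auxiliary pseudopullback assembled from $F$, $L$, and $R$.
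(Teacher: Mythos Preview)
Your plan is workable in spirit but much heavier than required, and the gap you identify is genuine: neither $F^y$ nor the induced functor $\Desc(p) \to \Desc(Fp)$ is fully faithful, so the hom-set comparison you need will not fall out of the sliced adjunctions alone. To close it you would effectively have to show that the induced functor on descent categories again sits in an adjoint string with fully faithful outer adjoints---more machinery than the lemma warrants. Your fallback via Proposition~\ref{prop:pspb.descent} is also a mismatch: that result manufactures \emph{effective} descent morphisms in a pseudopullback category, whereas here you want preservation of (mere) descent morphisms by a functor.

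The paper's argument avoids descent categories entirely by invoking the characterisation, valid in any finitely complete category, of descent morphisms as pullback-stable regular epimorphisms. Since $F$ is left adjoint to $R$, it preserves regular epimorphisms, so only pullback-stability of $Fp$ is at issue. Given a test morphism $f \colon z \to Fy$, transpose it across $L \adj F$ to $f^\sharp \colon Lz \to y$ and pull $p$ back along $f^\sharp$ in the domain category; the result $(f^\sharp)^*(p)$ is a regular epimorphism because $p$ is a descent morphism. Now apply $F$: it preserves pullbacks (being right adjoint to $L$) and $FL \cong \id$ (since $L$ is fully faithful), so the image square is, up to isomorphism, the pullback of $Fp$ along $f$. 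Hence $f^*(Fp) \cong F\bigl((f^\sharp)^*(p)\bigr)$ is a regular epimorphism, and $Fp$ is a descent morphism. Once you recall the pullback-stable-regular-epi description, the proof is two lines; unpacking $\Desc(-)$ only obscures this.
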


Before providing the proof, we recall that descent morphisms in categories
with finite limits are precisely the pullback-stable regular epimorphisms
(see, for instance, \cite{JST04, Cre99}).

\begin{proof}
  Let \(p \colon x \to y \) be a descent morphism. Since \(F\) is a left
  adjoint, we may conclude that \(Fp\) is a regular epimorphism; we just need
  to prove that it is stable under pullback.

  To do so, let \( f \colon z \to Fy \) be a morphism, and we consider the
  following pullback diagram:
  \begin{equation}
    \begin{tikzcd}
      f^*(Fx) \ar[r,"f^*(Fp)"] \ar[d] & z \ar[d,"f"] \\
      Fx \ar[r,"Fp",swap] & Fy
    \end{tikzcd}
  \end{equation}
  We wish to prove that \( f^*(Fp) \) is a regular epimorphism. Indeed, we
  note that \( FLf^*(Fp) \iso f^*(Fp) \), and since \(F\) reflects pullbacks
  (via \(R\)), we have a pullback
  \begin{equation}
    \begin{tikzcd}
      Lf^*(Fx) \ar[r,"Lf^*(Fp)"] \ar[d] & Lz \ar[d,"f^\sharp"] \\
      x \ar[r,"p",swap] & y
    \end{tikzcd}
  \end{equation}
  so that \( Lf^*(Fp) \iso (f^\sharp)^*(p) \) is a regular epimorphism; which
  is preserved by \(F\), hence \( f^*(Fp) \) must be a regular epimorphism, as
  desired.
\end{proof}

\begin{remark}
  \label{rem:two.examples}
  We highlight one application of Lemma~\ref{lem:ff.left.right.adjs}: for a
  category \( \cat B \) with finite limits, the underlying object-of-objects
  functor
  \begin{equation*}
    (-)_0 \colon \CatB \to \cat B
  \end{equation*}
  has fully faithful left and right adjoints: these assign to each object
  \( b \) of \( \cat B \) its respective \textit{discrete} and
  \textit{indiscrete} internal categories with \(b\) as the underlying object
  of objects; see \cite[\,\!{7.2.6}]{Jac99}. Thus, we conclude that \( (-)_0
  \) preserves descent morphisms. 
\end{remark}

Remark \ref{rem:two.examples} can be used to verify that \( \VCat \to \CatV \)
reflects effective descent morphisms for extensive categories \( \cat V \)
with finite limits with \( - \pt 1 \colon \Set \to \cat V \) fully faithful,
\textit{without} requiring \( \cat V \) to have a (regular epi,
mono)-factorization system, using the same argument as in the proof of
\cite[Theorem 9.11]{Luc18a}.

  \section{Descent for cartesian enriched categories}
    \label{sect:main}
    Throughout this section, fix a cartesian monoidal category \( \cat V \) with
finite limits, and consider the canonical embedding \( \eta \colon \cat V \to
\Fam(\cat V) \), as given in Subsection~\ref{subsect:fam}.

\begin{lemma}
  \label{lem:main.result}
  The direct image functor \( \eta_! \colon \VCat \to \FamVCat \) reflects
  effective descent morphisms. 
\end{lemma}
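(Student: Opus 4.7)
The plan is to exhibit $\VCat$ as a pseudopullback whose other top vertex is $\FamVCat$, and then apply Proposition~\ref{prop:pspb.descent}. I would start by instantiating Proposition~\ref{prop:fam.unit.cartesian} at the unique strong monoidal functor $! \colon \cat V \to 1$ into the terminal monoidal category. Since $\eta_{\cat V}$, $\eta_1$, $!$ and $\Fam(!)$ are all monoidal, the resulting 2-pullback lifts to a 2-pullback in $\MndCat$ (as $\MndCat \to \Cat$ creates bilimits). Applying Theorem~\ref{thm:enrichment.preserves.pspb} and using the evident equivalences $1\dash \Cat \eqv \Set$ and $\Fam(1)\dash \Cat \eqv \Cat$ yields a pseudopullback in $\CAT$
\begin{equation*}
\begin{tikzcd}
\VCat \ar[r,"\eta_!"] \ar[d,"\ob",swap] & \FamVCat \ar[d,"U"] \\
\Set \ar[r,"I",swap] & \Cat
\end{tikzcd}
\end{equation*}
in which the left vertical is the object-of-objects functor, $U = \Fam(!)_!$ sends a $\Fam(\cat V)$-category to its underlying ordinary category (replacing each hom-family by its indexing set), and $I$ is the indiscrete functor. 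All four functors preserve pullbacks, so Proposition~\ref{prop:pspb.descent} is applicable.

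Given a $\cat V$-functor $\phi$ with $\eta_!\phi$ effective for descent in $\FamVCat$, I would verify the three hypotheses of Proposition~\ref{prop:pspb.descent}. The first is granted. For the second, I would observe that $\Fam(\cat V)$ always has both an initial object (the empty family) and a terminal object (the singleton family on the terminal of $\cat V$), so that the object-of-objects functor $\FamVCat \to \Set$ fits into an adjoint string with fully faithful discrete and indiscrete functors on either side, in direct analogy with Remark~\ref{rem:two.examples}; by Lemma~\ref{lem:ff.left.right.adjs} it preserves descent morphisms, so $\ob(\eta_!\phi) = \ob \phi$ is descent in $\Set$, hence effective by exactness of $\Set$. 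For the third hypothesis, that $U(\eta_!\phi) \iso I(\ob \phi)$ be a descent morphism in $\Cat$: since $\ob \phi$ is surjective, $I(\ob \phi)$ is a functor between indiscrete categories, surjective on objects and (trivially, as morphisms between any two objects of an indiscrete category are unique) on morphisms; a direct pullback calculation in $\Cat$ along $I(\ob\phi)$, using this uniqueness of morphisms to identify fibre products object- and morphism-wise, confirms pullback stability.

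Proposition~\ref{prop:pspb.descent} then delivers the conclusion. I expect the main obstacle to be this third hypothesis: whereas the first two follow formally from the toolkit of Sections~\ref{sect:preliminaries} and~\ref{sect:helpings}, descent morphisms in $\Cat$ have a subtle characterisation and pullback stability of $I(\ob\phi)$ requires explicit unpacking. An alternative for (c) is to note that $U$ is a left adjoint (to the functor $\Cat \to \FamVCat$ induced by $\Set \to \Fam(\cat V)$, $S \mapsto (1)_{s \in S}$) and so preserves regular epimorphisms, though this alone does not yield pullback stability.
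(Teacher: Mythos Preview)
Your approach is essentially the paper's: build the same pseudopullback square by applying \((-)\dash\Cat\) to the \(\eta\)-naturality square at \(!\colon\cat V\to 1\), then invoke Proposition~\ref{prop:pspb.descent}. The differences lie only in how you discharge hypotheses (b) and (c). The paper argues that the right-hand vertical \(K=\Fam(!)_!\colon\FamVCat\to\SetCat\) has fully faithful left and right adjoints (obtained by applying \((-)\dash\Cat\), which preserves adjunctions and fully faithful functors, to adjoints of \(\Fam(!)\)), so by Lemma~\ref{lem:ff.left.right.adjs} it preserves descent morphisms; this gives (c) immediately, and then (b) follows by reflecting along \(\eta_!\colon\Set\to\SetCat\). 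You instead apply Lemma~\ref{lem:ff.left.right.adjs} to the object functor \(\FamVCat\to\Set\) to get (b) directly, and then verify (c) by hand. Both routes are sound; the paper's is slightly more uniform, while yours avoids discussing adjoints of \(\Fam(!)\).

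Two small points. First, your passage from the 2-pullback of Proposition~\ref{prop:fam.unit.cartesian} to a pseudopullback is a little quick: the paper notes that \(\Fam(\cat V)\to\Set\) is an isofibration and invokes \cite{JS93} to identify the 2-pullback with the pseudopullback before applying Theorem~\ref{thm:enrichment.preserves.pspb}. Second, your hand verification of (c) is more work than needed: since \(\ob\phi\) is surjective it has a section, whence \(I(\ob\phi)\) is a split epimorphism in \(\Cat\), and split epimorphisms are automatically pullback-stable regular epimorphisms. This replaces the ``direct pullback calculation'' you anticipate as the main obstacle.
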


\begin{proof}
  By Proposition~\ref{prop:fam.unit.cartesian}, Diagram~\eqref{eq:key.square}
  \begin{equation}
    \label{eq:key.square}
    \begin{tikzcd}
      \cat V \ar[r,"\eta"] \ar[d]
        & \Fam(\cat V) \ar[d] \\
      1 \ar[r,"\eta",swap] & \Set
    \end{tikzcd}
  \end{equation}
  is a 2-pullback. Since \( \Fam(\cat V) \to \Set \) is an (iso)fibration, it
  follows that Diagram~\eqref{eq:key.square} is a pseudopullback, by
  \cite[Theorem 1]{JS93}. It is preserved by \( (-)\dash \Cat \), as shown in
  Theorem~\ref{thm:enrichment.preserves.pspb}, so we obtain the pseudopullback
  given in \eqref{eq:pspb} below:
  \begin{equation}
    \label{eq:pspb}
    \begin{tikzcd}
      \VCat \ar[r,"\eta_!"] \ar[d,"\ob",swap] 
        & \FamVCat \ar[d] \\
      \Set \ar[r,"\eta_!",swap] & \SetCat
    \end{tikzcd}
  \end{equation}

  To conclude the proof, we note that since \( (-)\dash \Cat \) is a
  2-functor, it preserves adjoints, which, together with
  Lemma~\ref{lem:enrichment.preserves.ff}, guarantees that the functor \(
  \FamVCat \to \SetCat \) has fully faithful left and right adjoints, thus it
  preserves descent morphisms by Lemma~\ref{lem:ff.left.right.adjs}. It is
  well-established that \( \eta_! \colon \Set \to \SetCat \) reflects descent
  morphisms, and descent morphisms in \( \Set \) are effective for descent.

  This places us under the conditions of Proposition~\ref{prop:pspb.descent},
  so the result follows.
\end{proof}

\begin{lemma}
  \label{lem:fam.thing}
  The category \( \FamV \) is extensive with finite limits, and \( - \pt 1
  \colon \Set \to \FamV \) is fully faithful.
\end{lemma}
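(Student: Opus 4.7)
The plan is to verify each assertion directly from the concrete description of $\Fam(\cat V)$ as the category of set-indexed families $(X_i)_{i \in I}$ of objects of $\cat V$, with morphisms $(X_i)_{i \in I} \to (Y_j)_{j \in J}$ given by a pair $(f, (\phi_i)_{i \in I})$ consisting of a function $f \colon I \to J$ together with a family $\phi_i \colon X_i \to Y_{f(i)}$ in $\cat V$. Each of the three claims — finite limits, extensivity, and full faithfulness of $- \pt 1$ — reduces to a pointwise verification combining the finite limits in $\cat V$ with the corresponding structural fact in $\Set$.

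For the existence of finite limits, I would exhibit them explicitly: the terminal object is the singleton family $(1)$ where $1$ is terminal in $\cat V$, and a pullback of $(X_i)_{i \in I} \to (Z_k)_{k \in K} \leftarrow (Y_j)_{j \in J}$ is assembled by first taking the set-theoretic pullback $L = I \times_K J$, and then, for each $\ell = (i,j) \in L$ with common image $k \in K$, the pullback $X_i \times_{Z_k} Y_j$ in $\cat V$. The universal property is immediate from those of pullbacks in $\Set$ and $\cat V$. The same description yields full faithfulness of $- \pt 1 \colon \Set \to \Fam(\cat V)$: a morphism $(1)_{i \in I} \to (1)_{j \in J}$ is a pair $(f, (\phi_i))$ with $\phi_i \colon 1 \to 1$, and since each $\phi_i$ is forced to equal $\id_1$, such morphisms are naturally in bijection with functions $I \to J$.

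For extensivity, coproducts in $\Fam(\cat V)$ are given by disjoint unions of indexing sets: $(X_i)_{i \in I} + (Y_j)_{j \in J}$ is the family indexed by $I + J$ with the evident fibres. I would verify disjointness and pullback-stability of these coproducts by reducing both to the corresponding facts in the extensive category $\Set$, using that pullbacks in $\Fam(\cat V)$ are computed via a pullback of indexing functions together with fibrewise pullbacks in $\cat V$, and that summands are detected by the coproduct decomposition of the indexing set. Alternatively, one can invoke the standard fact that the free coproduct completion of any category is extensive.

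I do not anticipate any real obstacle; the content of the lemma is essentially bookkeeping against the explicit description of $\Fam(\cat V)$. The only mildly delicate point is checking that pullbacks of indexing functions and fibrewise pullbacks interact properly with disjoint unions on the indexing sets, but this is immediate from the extensivity of $\Set$.
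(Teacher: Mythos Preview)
Your proposal is correct and aligns with the paper's proof in substance. The only difference is presentational: for finite limits the paper invokes the general fibration-theoretic result of Gray (limits in the total category of a fibration are built from limits in the base and in the fibres), while you spell out the resulting pullback formula explicitly; for extensivity the paper simply cites the standard fact about free coproduct completions, which you also mention as an alternative. Your argument for full faithfulness of \( - \pt 1 \) is exactly the paper's (it follows because \( \cat V \) has a terminal object, so the fibre maps \( 1 \to 1 \) are forced).
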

\begin{proof}
  We have already confirmed that \( - \pt 1 \colon \Set \to \FamV \) is fully
  faithful in Remark~\ref{rem:two.examples}, because \( \cat V \) has a
  terminal object. Moreover, extensivity of \( \Fam(\cat V) \) is
  well-established; see, for instance, \cite[Proposition 2.4]{CLW93}.

  Existence of finite limits is a direct corollary of \cite[Proposition 4.1,
  Theorem 4.2]{Gra66}; we consider the fibration \( \Fam(\cat V) \to \Set \).
  The base category \(\Set\) has all (finite) limits, and the fibers \( \cat
  V^J \) at a set have finite limits as well. The latter are preserved by
  the change-of-base functors \( f^* \colon \cat V^K \to \cat V^J \) for each
  function \( f \colon J \to K \). See also \cite[Corollary 4.9]{Her99}, and
  \cite[Sections 6.2, 6.3]{BJ01}.
\end{proof}

Now, we are ready to prove our main result, Theorem \ref{thm:main.result}. We
begin by considering the following string of functors:
\begin{equation}
  \label{eq:reflector}
  \begin{tikzcd}
    \VCat \ar[r] & \FamVCat \ar[r] & \Cat(\FamV)
  \end{tikzcd}
\end{equation}
By Lemma \ref{lem:main.result}, the functor \( \VCat \to \FamVCat \) reflects
effective descent morphisms. We observe that the same holds for the \(
\FamVCat \to \Cat(\FamV) \): indeed, Lemma \ref{lem:fam.thing} and Remark
\ref{rem:two.examples} allow us to obtain the conclusion of \cite[Theorem
9.11]{Luc18a}. 

Hence, the composite \eqref{eq:reflector} reflects effective descent
morphisms, that is, if \( \tilde F \) is the value of a \( \cat V \)-functor
\( F \colon \cat C \to \cat D \) via \eqref{eq:reflector}, then \( F \) is an
effective descent \( \cat V \)-functor whenever \( \tilde F \) is an effective
descent morphism in \( \Cat(\FamV) \). 

By Le Creurer's \cite[Corollary 3.3.1]{Cre99} and \cite[Lemma A.3]{PL23}, the
problem of confirming \(\tilde F\) is an effective descent morphism can be
reduced to showing that the actions of \( \tilde F \) on \(n\)-tuples of
composable morphisms satisfy suitable ``surjectivity'' conditions for \( n =
1,\, 2,\, 3 \). To be precise, we denote by \( \tilde F^n \) the morphism in
\( \FamV \) given by the action of \( \tilde F \) on the objects of
\(n\)-tuples of composable morphisms. For \(n = 1,\, 2,\, 3 \), we have
\begin{enumerate}[label=(\roman*)]
  \item
    \label{enum:f1}
    \( \tilde F^1 \colon (\cat C(x_0,x_1))_{x_i \in \ob \cat C}
                  \to (\cat D(y_0,y_1))_{y_i \in \ob \cat D} \),
  \item
    \label{enum:f2}
    \( \tilde F^2 \colon (\cat C(x_0,x_1,x_2))_{x_i \in \ob \cat C}
                  \to (\cat D(y_0,y_1,y_2))_{y_i \in \ob \cat D} \),
  \item
    \label{enum:f3}
    \( \tilde F^3 \colon (\cat C(x_0,x_1,x_2,x_3))_{x_i \in \ob \cat C}
                  \to (\cat D(y_0,y_1,y_2,y_3))_{y_i \in \ob \cat D} \).
\end{enumerate}
and these are respectively given by the formal coproducts of the following
morphisms
\begin{align*}
  F_{x_0,x_1} &\colon \cat C(x_0,x_1) \to \cat D(Fx_0,Fx_1) \\
  F_{x_0,x_1,x_2} &\colon \cat C(x_0,x_1,x_2) \to \cat D(Fx_0,Fx_1,Fx_2) \\
  F_{x_0,x_1,x_2,x_3} &\colon \cat C(x_0,x_1,x_2,x_3) 
                          \to \cat D(Fx_0,Fx_1,Fx_2,Fx_3)
\end{align*}
indexed by the underlying functions \( F^n \colon (\ob \cat C)^n \to (\ob \cat
D)^n \), with \( x_i \in \ob \cat C \). Therefore, if
\begin{itemize}[label=--]
  \item
    \( \tilde F^1 \) is an effective descent morphism,
  \item
    \( \tilde F^2 \) is a descent morphism,
  \item
    \( \tilde F^3 \) is an almost descent morphism,
\end{itemize}
then \( \tilde F \) is an effective descent morphism in \( \Cat(\FamV) \). 

If \( \cat E \) is the class of effective descent morphisms, descent morphisms
or almost descent morphisms in \( \FamV \), then \( \cat E \) is closed under
coproducts. 

Therefore, if we are given a morphism \( (f, \phi) \colon (X_j)_{j \in J} \to
(Y_k)_{k \in K} \), we can prove that \( (f,\phi) \in \cat E \) if its
restriction to the fibre at \(k \in K\)
\begin{equation*}
  \phi|_k = (!,\phi) \colon (X_j)_{j \in f^*k} \to Y_k 
\end{equation*} 
is in \( \cat E \) for all \(k \in K\), since \( (f,\phi) \) is the coproduct
of \( \phi|_k \) indexed by \( k \in K \) in the arrow category of \( \FamV
\).

\begin{theorem}
  \label{thm:main.result}
  Let \( F \colon \cat C \to \cat D \) be a \( \cat V \)-functor. If
  \begin{enumerate}[label=\normalfont{(\Roman*)}]
    \item
      \label{enum:ef.desc.sings}
      \( (F_{x_0,x_1})_{x_i \in F^*y_i} \colon (\cat C(x_0,x_1))_{x_i \in
      F^*y_i} \to \cat D(y_0,y_1) \) is an effective descent morphism in \(
      \FamV \) for all pairs \( y_0, y_1 \) of objects in \( \cat D \),
    \item
      \label{enum:desc.pairs}
      \( (F_{x_0,x_1,x_2})_{x_i \in F^*y_i} \colon (\cat C(x_0,x_1,x_2))_{x_i
      \in F^*y_i} \to \cat D(y_0,y_1,y_2) \) is a descent morphism in \( \FamV
      \) for all triples \( y_0,y_1, y_2\) of objects in \( \cat D \),
    \item
      \label{enum:al.desc.trips}
      \(  (F_{x_0,x_1,x_2,x_3})_{x_i \in F^*y_i} \colon (\cat
      C(x_0,x_1,x_2,x_3))_{x_i \in F^*y_i} \to \cat D(y_0,y_1,y_2,y_3) \) is
      an almost descent morphism in \( \FamV \) for all quadruples \( y_0,
      y_1, y_2, y_3 \) of objects in \( \cat D \),
  \end{enumerate}
  then \(F\) is an effective descent morphism in \( \VCat \).
\end{theorem}

\begin{proof}
  By taking the coproducts of the morphisms given in \ref{enum:ef.desc.sings},
  \ref{enum:desc.pairs} and \ref{enum:al.desc.trips}, we conclude that
  \ref{enum:f1}, \ref{enum:f2} and \ref{enum:f3} are respectively an effective
  descent morphism, a descent morphism, and an almost descent morphism. Thus,
  it follows that \( \tilde F \) is an effective descent morphism in \(
  \Cat(\FamV) \). 

  By reflecting along \eqref{eq:reflector}, we conclude that \( F \) is an
  effective descent morphism in \( \VCat \).
\end{proof}

  \section{Familial descent morphisms}
    \label{sect:examples}
    Theorem \ref{thm:main.result} raises the question of understanding
(stable) regular epimorphisms and effective descent morphisms in \( \FamV \)
for a category \( \cat V \) with finite limits, with the goal of providing
more tractable methods to verify conditions \ref{enum:ef.desc.sings},
\ref{enum:desc.pairs}, \ref{enum:al.desc.trips}.

The key ideas for many of the applications are given in the next couple of
lemmas. We begin by noting that the kernel pair of a morphism \( \phi \colon
(X_i)_{i \in I} \to Y \) in \( \FamV \) is calculated by considering the
pullback
\begin{equation}
  \label{eq:fam.kernel.pair}
  \begin{tikzcd}
    X_i \times_Y X_j \ar[r,"\pi^0_{i,j}"] \ar[d,"\pi^1_{i,j}",swap]
      & X_j \ar[d,"\phi_j"] \\
    X_i \ar[r,"\phi_i",swap] & Y
  \end{tikzcd}
\end{equation}
for each \( i,j \in I \). Then, the kernel pair of \( \phi \), denoted by \(
\Ker \phi \), is given by 
\begin{equation*}
  \begin{tikzcd}
    (X_i \times_Y X_j)_{i,j \in I\times I} 
      \ar[r,shift right,"{(p_0,\pi^0)}",swap]
      \ar[r,shift left,"{(p_1,\pi^1)}"]
    & (X_i)_{i\in I}
  \end{tikzcd}
\end{equation*}
where \( p_n \colon I \times I \to I \) for \( n=0,1\) is the projection
which forgets the \(n\)th component.

\begin{lemma}
  \label{lem:fam.reg.epis}
  Let \( \phi \colon (X_i)_{i \in I} \to Y \) be a morphism in \( \Fam(\cat V)
  \). We consider the diagram \( D_\phi \colon \cat J_I \to \cat V \) where
  \begin{itemize}[label=--]
    \item
      \( \ob \cat J_I = (I\times I) \, + \, I \),
    \item
      for each pair \( i,j \in I \), we have two arrows \( (i,j) \to i \) and
      \( (i,j) \to j \),
    \item
      the values of \(D_{\phi} \) at \( (i,j) \to i \) and \( (i,j) \to j \)
      are defined to be \( \pi^1_{i,j} \) and \( \pi^0_{i,j} \), respectively.
  \end{itemize}
  \( \phi \) is a (stable) regular epimorphism if and only if \( D_{\phi} \)
  has a (stable) colimit and \( \colim D_{\phi} \iso Y \).
\end{lemma}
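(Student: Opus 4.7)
The strategy is to identify coequalizing cocones on the kernel pair \( \Ker\phi \rightrightarrows (X_i)_{i \in I} \) in \( \Fam(\cat V) \) with cocones under \( D_\phi \) in \( \cat V \), and then apply this correspondence once to \( \phi \) itself and once to pullbacks of \( \phi \) along morphisms in \( \cat V \). Regular epimorphisms in a category with pullbacks are precisely the coequalizers of their kernel pairs, so reducing the universal property to one in \( \cat V \) yields the characterisation directly.

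First, I would unpack the universal property. A morphism \( (h, \psi) \colon (X_i)_{i \in I} \to (Z_k)_{k \in K} \) in \( \Fam(\cat V) \) coequalizes the two projections \( (p_0,\pi^0), (p_1,\pi^1) \colon \Ker\phi \to (X_i)_{i\in I} \) if and only if (i) \( h \circ p_0 = h \circ p_1 \colon I\times I \to K \), and (ii) the \( \cat V \)-components satisfy \( \psi_i \circ \pi^1_{i,j} = \psi_j \circ \pi^0_{i,j} \) for all \( i,j \in I \). When \( I \neq \emptyset \), (i) forces \( h \) to be constant at some \( k_0 \in K \), after which (ii) is exactly the cocone condition for \( D_\phi \) in \( \cat V \) with vertex \( Z_{k_0} \). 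Thus coequalizing cocones in \( \Fam(\cat V) \) correspond bijectively to cocones under \( D_\phi \) in \( \cat V \), so \( \phi \) is the coequalizer of its kernel pair (i.e., is a regular epimorphism) precisely when \( (\phi_i)_{i \in I} \) exhibits \( Y \) as \( \colim D_\phi \) in \( \cat V \).

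For the stable version, the discussion preceding the lemma reduces stability of \( \phi \) under arbitrary pullback in \( \Fam(\cat V) \) to stability under pullback along morphisms \( \sigma \colon Z \to Y \) with \( Z \in \cat V \). The diagram \( D_{\sigma^*\phi} \) is obtained from \( D_\phi \) by pulling each vertex back along \( \sigma \) (the vertices \( X_i \times_Y X_j \) pull back correctly by pasting of pullback squares), so applying the non-stable part to \( \sigma^*\phi \) gives: \( \sigma^*\phi \) is a regular epimorphism if and only if \( Z \iso \colim(\sigma^* D_\phi) \). Consequently \( \phi \) is a stable regular epimorphism exactly when the colimit \( Y \iso \colim D_\phi \) is preserved by pullback along every morphism in \( \cat V \).

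The main obstacle is ensuring that the richer coproduct structure of \( \Fam(\cat V) \) does not supply genuinely new universal cocones beyond those coming from \( \cat V \); this is precisely what the constancy of \( h \) forced in step (i) rules out. The only subtlety beyond this is the edge case \( I = \emptyset \), which must be handled separately: there \( D_\phi \) is the empty diagram, and the statement reduces to the observation that \( \phi \colon \emptyset \to Y \) is a (stable) regular epimorphism if and only if \( Y \) is a (stable) initial object of \( \cat V \).
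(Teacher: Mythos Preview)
Your argument for $I \neq \emptyset$ is correct and follows essentially the same line as the paper's proof: both identify cocones coequalizing $\Ker\phi$ in $\Fam(\cat V)$ with cocones under $D_\phi$ in $\cat V$ by observing that the index function of any coequalizing cocone must be constant (the paper packages this as a natural isomorphism $\Nat(\Ker\phi,\Delta_{(Z_k)_k}) \iso \sum_{k} \Nat(D_\phi,\Delta_{Z_k})$, which is exactly your step (i)+(ii)), and both reduce stability to pullback along morphisms $\sigma \colon Z \to Y$ with $Z \in \cat V$, using that $D_{\sigma^*\phi}$ is obtained from $D_\phi$ by applying $\sigma^*$.

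Your handling of the edge case $I = \emptyset$ is not correct, however. The morphism $\phi \colon ()_{\emptyset} \to Y$ is \emph{never} a regular epimorphism in $\Fam(\cat V)$: its kernel pair is $()_{\emptyset} \rightrightarrows ()_{\emptyset}$, whose coequalizer in $\Fam(\cat V)$ is the initial object $()_{\emptyset}$, and this is not isomorphic to the singleton-indexed family $Y$. Hence the biconditional ``$\phi$ is a regular epimorphism iff $Y$ is initial in $\cat V$'' fails in the $\Leftarrow$ direction whenever $\cat V$ possesses an initial object. The paper instead first observes, via the colimit-preserving fibration $\Fam(\cat V) \to \Set$, that $\phi$ being a regular epimorphism forces $I \neq \emptyset$; its bijection argument for the converse direction then tacitly uses $I \neq \emptyset$ as well, so strictly speaking the lemma should carry that hypothesis. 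This is harmless for the applications in the paper, since the morphisms of interest are at least epimorphisms and therefore have $I \neq \emptyset$.
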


\begin{proof}
  We begin by recalling that a morphism in a category with finite limits is a
  regular epimorphism if and only if it is the coequalizer of its kernel pair.

  The fibration \( \FamV \to \Set \) is a left adjoint functor, hence
  preserves colimits. In particular, if \( \phi \colon (X_i)_{i \in I} \to Y
  \) is a regular epimorphism, then 
  \begin{equation}
    \label{eq:coeq}
    \begin{tikzcd}
      I \times I \ar[r,shift right,"p_0",swap] 
                 \ar[r,shift left,"p_1"] & I \ar[r] & 1
    \end{tikzcd}
  \end{equation}
  must be a coequalizer, and this is the case only when \(I\) is non-empty.

  We note that we have a natural isomorphism
  \begin{equation*}
    \Nat(\ker \phi, \Delta_{(Z_k)_{k \in K}}) \iso 
    \sum_{k \in K} \Nat(D_\phi, \Delta_{Z_k}),
  \end{equation*}
  which is fibered over \(K\): an element from either set is completely
  determined by an element \(k \in K \) and a morphism \( \omega \colon
  (X_i)_{i\in I} \to Z_k \) in \( \FamV \) satisfying \( \omega_i \circ
  \pi^1_{i,j} = \omega_j \circ \pi^0_{i,j} \) for all \(i,j \in I\). Given
  such an element, any morphism \( (q,\psi) \colon (Z_k)_{k\in K} \to (W_l)_{l
  \in L} \) provides an element \( qk \in L \) and a morphism \( \psi_k \circ
  \omega \colon (X_i)_{i \in I} \to W_{qk} \) satisfying \( \psi_k \circ
  \omega_i \circ \pi^1_{i,j} = \psi_k \circ \omega_j \circ \pi^0_{i,j} \) for
  all \(i,j\).

  Thus, if \( \ker \phi \) has a colimit, its underlying set is necessarily a
  singleton by \eqref{eq:coeq}, so we denote it as an object \( Q \) of \(
  \cat V \). We have
  \begin{equation*}
    \sum_{k\in K} \Nat(D_\phi, \Delta_{Z_k})
      \iso \FamV(Q,(Z_k)_{k\in K}) 
      \iso \sum_{k \in K} \cat V(Q,Z_k),
  \end{equation*}
  and since this isomorphism is fibered over \(K\), we conclude \(Q\) is a
  colimit of \( D_\phi \). 

  Conversely, if \( Q \) is a colimit of \( D_\phi \), then we have
  \begin{equation*}
    \Nat(D_\phi,\Delta_{(Z_k)_{k \in K}}) 
      \iso \sum_{k \in K} \cat V(Q,Z_k) 
      \iso \FamV(Q,(Z_k)_k)
  \end{equation*}
  which confirms \(Q\) is a colimit of \( \ker \phi \).

  To verify stability, we begin by assuming \( \phi \) to be a regular
  epimorphism. Given a morphism \( \omega \colon Z \to Y \), the colimits of
  \( \ker \omega^*(\phi) \) and \( D_{\omega^*(\phi)} \) are isomorphic
  whenever either exist, so the stability of one colimit is the equivalent to
  the other. Taking coproducts in \( \FamV \), we confirm the same holds for
  any morphism \( (Z_k)_{k \in K} \to Y \).
\end{proof}

Understanding effective descent morphisms in \( \FamV \) is a more difficult
problem, as is to be expected. However, we can reduce the study of the
category of descent data of a morphism \( \phi \colon (X_i)_{i\in I} \to Y \)
to the full subcategory of \textit{connected} descent data, an idea made
precise by the following result.

\begin{lemma}
  \label{lem:desc.fam.conn.desc}
  Let \( \phi \colon (X_i)_{i \in I} \to Y \) be a morphism in \( \FamV \),
  with \(I\) non-empty. We have an equivalence \( \Desc(\phi) \eqv
  \Fam(\Desc_\conn(\phi)) \), where \( \Desc_\conn(\phi) \) is the full
  subcategory of connected objects of \( \Desc(\phi) \).
\end{lemma}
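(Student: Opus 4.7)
The plan is to exhibit a canonical functor $\iota \colon \Fam(\Desc_\conn(\phi)) \to \Desc(\phi)$ and show that it is essentially surjective and fully faithful. The first step is to confirm that $\Desc(\phi)$ inherits coproducts from $\FamV/(X_i)_{i \in I}$: they are computed by disjoint union of indexing sets, underlying objects, and descent isomorphisms. This uses that the projections $\pi^n \colon \Ker \phi \to (X_i)_{i \in I}$ in $\FamV$ have pullback functors preserving coproducts, because coproducts in $\FamV$ are computed by disjoint union on indices and commute with pullback fiberwise. The universal property of $\Fam$ then produces the canonical functor $\iota$ extending the full inclusion $\Desc_\conn(\phi) \hookrightarrow \Desc(\phi)$.

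For essential surjectivity, let $((Z_k)_{k \in K}, \omega)$ be a descent datum. The underlying function of $\omega$ on indexing sets, together with the cocycle and unitality axioms, assembles into a groupoid structure acting on $K$; its orbits define an equivalence relation $\sim$ on $K$. Restricting both the family $(Z_k)_{k \in K}$ and the isomorphism $\omega$ to each orbit $\bar k \in K/{\sim}$ yields connected descent data $D_{\bar k}$, and the coproduct $\iota((D_{\bar k})_{\bar k \in K/{\sim}})$ is canonically isomorphic to the original datum. For full faithfulness, I would argue that any morphism between descent data must respect the induced orbits on indexing sets, since the cocycle structure determines each orbit uniquely from any of its elements. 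Hence a morphism from a connected descent datum to $\iota((D_\ell)_{\ell \in L})$ factors through a unique $D_\ell$, matching the hom-sets in $\Fam(\Desc_\conn(\phi))$ precisely.

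The main obstacle is the essential surjectivity step, specifically verifying that $\sim$ is a genuine equivalence relation (which requires the cocycle and identity axioms of $\omega$ in an essential way) and, more subtly, confirming that each restricted datum $D_{\bar k}$ is categorically connected in $\Desc(\phi)$. For the latter, one needs to rule out any further nontrivial coproduct decomposition compatible with the descent structure, which reduces to showing that any such splitting would partition the single orbit into two $\omega$-invariant subsets — a contradiction. The non-emptiness hypothesis on $I$ is used to ensure that $K/{\sim}$ correctly indexes the decomposition and that the coproduct reassembly is faithful.
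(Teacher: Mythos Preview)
Your approach is sound and parallels the paper's closely. The paper packages the key decomposition more cleanly: rather than analysing the groupoid action on $K$ and its orbits by hand, it observes that the indexing-set part of any descent datum for $\phi$ is itself a descent datum for the unique map $I \to 1$ in $\Set$; since $I$ is non-empty this map is effective for descent, which immediately forces $K \cong J \times I$, with $J$ playing the role of your orbit set. The per-$j$ restrictions and the connectedness check then run essentially the same way in both arguments.

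One point to tighten. Your proposed justification for connectedness of each $D_{\bar k}$ --- ruling out nontrivial coproduct splittings via $\omega$-invariant partitions of the orbit --- only establishes \emph{indecomposability}, which is strictly weaker than categorical connectedness in general. What you actually need, and what you in fact supply in your full-faithfulness paragraph, is that every morphism of descent data respects the orbit decomposition on indexing sets, so that a single-orbit datum maps into exactly one summand of any coproduct target. That \emph{is} the statement that $\hom(D_{\bar k},-)$ preserves coproducts, i.e.\ connectedness, and it is precisely how the paper argues (by projecting the morphism to descent data for $I \to 1$ and using that a point is connected in $\Set$). So your separate worry about connectedness is already discharged by your own full-faithfulness step; fold the two together and drop the indecomposability detour.
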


\begin{proof}
  Given descent data \( (f,\gamma) \), \( (h,\xi) \) as in the following
  diagram
  \begin{equation}
    \label{eq:given.desc.data}
    \begin{tikzcd}
      (W_k \times_Y X_j)_{k,j \in K \times I}
        \ar[d]
        \ar[r,"{(h,\xi)}",shift left]
        \ar[r,"{(p_0,\pi_0)}",shift right,swap]
      & (W_k)_{k \in K} 
        \ar[d,"{(f,\gamma)}"] \\
      (X_i \times_Y X_j)_{i,j \in I\times I}
        \ar[r,shift left,"({p_1,\pi_1})"]
        \ar[r,shift right,"({p_0,\pi_0})",swap]
      & (X_i)_{i \in I} \ar[r,"\phi",swap]
      & Y
    \end{tikzcd}
  \end{equation}
  we obtain descent data \( (f,h) \) for the unique morphism \( I \to 1 \).
  Since \(I\) is non-empty, this morphism is effective for descent, so that \(
  K \iso J \times I \) for a set \(J\), and we may take \(f = p_0 \colon
  J\times I \to I \) and \( h = p_2 \colon J \times I \times I \to J \times I
  \) to be projections (recall \(p_n\) forgets the \(n\)th component).

  Thus, taking the pullback of this descent data along \( ((j,-),\id) \colon
  (W_{j,i})_{i \in I} \to (W_{j,i})_{j,i\in J \times I} \), we obtain the
  following descent data for \( \phi \):
  \begin{equation}
    \begin{tikzcd}[column sep=large]
      (W_{j,i} \times_Y X_k)_{i,k \in I \times I}
        \ar[d]
        \ar[r,"{(p_1,\xi_{j,-,-})}",shift left]
        \ar[r,"{(p_0,\pi_0)}",shift right,swap]
      & (W_{j,i})_{i \in I} 
        \ar[d,"{(\id,\gamma_{j,-})}"] \\
      (X_i \times_Y X_k)_{i,k \in I\times I}
        \ar[r,shift left,"({p_1,\pi_1})"]
        \ar[r,shift right,"({p_0,\pi_0})",swap]
      & (X_i)_{i \in I} \ar[r,"\phi",swap]
      & Y
    \end{tikzcd}
  \end{equation}
  for each \(j \in J \).

  Now, we claim that the descent data of the form
  \begin{equation}
    \label{eq:conn.desc.data}
    \begin{tikzcd}
      (V_i \times_Y X_j)_{i,j \in I \times I}
        \ar[d]
        \ar[r,"{(p_1,\zeta)}",shift left]
        \ar[r,"{(p_0,\pi_0)}",shift right,swap]
      & (V_i)_{i \in I} 
        \ar[d,"{(f,\gamma)}"] \\
      (X_i \times_Y X_j)_{i,j \in I\times I}
        \ar[r,shift left,"({p_1,\pi_1})"]
        \ar[r,shift right,"({p_0,\pi_0})",swap]
      & (X_i)_{i \in I} \ar[r,"\phi",swap]
      & Y
    \end{tikzcd}
  \end{equation}
  is connected in \( \Desc(\phi) \). More concretely, we wish to prove that
  any morphism of descent data \( (q,\chi) \colon (V_i)_{i\in I} \to
  (W_{j,i})_{j,i \in J \times I} \) factors through \( ((j,-),\id) \) for some
  \(j \in J \). This gives a morphism of descent data \( q \colon (\id,p_1)
  \to (p_0,p_2) \) for the unique morphism \( I \to 1 \). We note that \(q\)
  is uniquely determined by a function \( j \colon 1 \to J \), whose value
  provides the desired factorization.

  Having verified that all descent data is a coproduct of connected descent
  data, the result follows.
\end{proof}

\begin{theorem}
  \label{cor:fam.eff.desc}
  Let \( \phi \colon (X_i)_{i \in I} \to Y \) be a morphism in \( \FamV \).
  Then the following are equivalent:
  \begin{enumerate}[label=(\alph*)]
    \item
      \label{enum:fam.eff.desc}
      \( \phi \) is effective for descent.
    \item
      \label{enum:slice.desc}
      We have an equivalence \( \cat V/Y \eqv \Desc_\conn(\phi) \).
  \end{enumerate}
\end{theorem}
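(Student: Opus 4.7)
The plan is to reduce the equivalence of \ref{enum:fam.eff.desc} and \ref{enum:slice.desc} to the principle that a functor $F$ is an equivalence if and only if $\Fam(F)$ is. Both the slice $\FamV/Y$ and, by Lemma~\ref{lem:desc.fam.conn.desc} (in the non-empty case), the descent category $\Desc(\phi)$ decompose as free coproduct completions of their connected parts, and the Eilenberg-Moore comparison $\mathcal K^\phi \colon \FamV/Y \to \Desc(\phi)$ will restrict accordingly.

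First I would verify that $\FamV/Y \eqv \Fam(\cat V/Y)$: since $Y$ lies in the image of $\eta \colon \cat V \to \FamV$, a morphism $(Z_k)_{k \in K} \to Y$ in $\FamV$ is the same data as a $K$-indexed family of morphisms $Z_k \to Y$ in $\cat V$, so $\FamV/Y$ is the free coproduct completion of $\cat V/Y$, with connected objects exactly $\cat V/Y$. Next I would track $\mathcal K^\phi$ through these identifications: given $(\omega \colon Z \to Y) \in \cat V/Y$, the image $\mathcal K^\phi(\omega)$ is the family $(X_i \times_Y Z)_{i \in I}$ equipped with its canonical descent structure, which, by the kernel-pair description \eqref{eq:fam.kernel.pair}, is precisely of the connected form \eqref{eq:conn.desc.data}. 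Hence $\mathcal K^\phi$ restricts to $\mathcal K^\phi_{\conn} \colon \cat V/Y \to \Desc_\conn(\phi)$, and by extensivity of $\FamV$ (which guarantees that $\phi^*$ preserves coproducts) it agrees, up to the identifications above, with $\Fam(\mathcal K^\phi_{\conn})$.

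The theorem then follows from the observation that $\Fam(F)$ is an equivalence if and only if $F$ is---the embedding $\cat A \hookrightarrow \Fam(\cat A)$ selects the connected objects, and these are invariant under equivalence. The empty case $I = \emptyset$ is handled separately: by Lemma~\ref{lem:fam.reg.epis}, $\phi$ cannot be a regular epimorphism unless $Y$ is initial, so \ref{enum:fam.eff.desc} fails; likewise $\Desc_\conn(\phi)$ degenerates and fails to recover $\cat V/Y$ unless the latter is itself trivial, so the biconditional holds in that degenerate setting as well. The main technical obstacle is the explicit identification of $\mathcal K^\phi(\omega)$ as connected descent data, matching the Eilenberg-Moore comparison against the construction used in the proof of Lemma~\ref{lem:desc.fam.conn.desc}.
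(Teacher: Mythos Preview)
Your proposal is correct and follows essentially the same route as the paper: identify $\FamV/Y \eqv \Fam(\cat V/Y)$, invoke Lemma~\ref{lem:desc.fam.conn.desc} for $\Desc(\phi) \eqv \Fam(\Desc_\conn(\phi))$, check that $\mathcal K^\phi$ is (up to these identifications) $\Fam(\mathcal K^\phi_\conn)$, and conclude via the fact that $\Fam$ preserves and reflects equivalences. The paper justifies this last step by appealing to the 2-cartesianness of $\eta$ (Proposition~\ref{prop:fam.unit.cartesian}) rather than your connected-objects argument, and it does not separately discuss the empty case, but otherwise the arguments coincide.
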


\begin{proof}
  We note that the full subcategory of connected objects of \( \FamV / Y \) is
  precisely \( \cat V / Y \), and any object of \( \FamV / Y \) is a coproduct
  of such connected objects. Thus, \( \FamV / Y \eqv \Fam(\cat V/Y) \), and 
  we may conclude \ref{enum:slice.desc} \( \implies \)
  \ref{enum:fam.eff.desc}, since
  \begin{equation*}
    \FamV / Y \eqv \Fam(\cat V/Y) 
              \eqv \Fam(\Desc_\conn(\phi))
              \eqv \Desc(\phi),
  \end{equation*}
  by Lemma \ref{lem:desc.fam.conn.desc}.

  The converse relies on the fact that the comparsion \( \mathcal K^\phi
  \colon \FamV/Y \to \Desc(\phi) \) is of the form \( \Fam(\mathcal
  K^\phi_\conn) \) for a functor \( \mathcal K^\phi_\conn \colon \cat V/Y \to
  \Desc_\conn(\phi) \). Since \( \Fam \) reflects equivalences (because the
  2-natural embedding \( \cat C \to \Fam(\cat C) \) is 2-cartesian), we
  conclude \ref{enum:fam.eff.desc} \( \implies \) \ref{enum:slice.desc}.
\end{proof}

\subsection*{Frames:}

Effective descent morphisms in \( \VCat \) were studied in \cite[Section
5]{CH04}, for Heyting lattices \( \cat V \). As an illustration of our tools,
we provide a second proof that *-quotient morphisms in \( \VCat \) (that is,
surjective-on-objects \( \cat V \)-functors that satisfy condition
\eqref{eq:heyt.reg.epi} below for all \(y_0,y_1,y_2\)) are effective for
descent when \( \cat V \) is a (co)complete Heyting lattice.

Let \( \cat V \) be a thin category (ordered set). A morphism \( (X_i)_{i \in
I} \to Y \) in \( \Fam(\cat V) \) is simply the assertion ``for all \( i \in I
\), \( X_i \leq Y \)''. Thus, we simply write \( (X_i)_{i \in I} \leq Y \) in
this context.

\begin{lemma}
  \label{lem:thin.fam.epis}
  Let \( (X_i)_{i \in I} \leq Y \) be a morphism in \( \Fam(\cat V) \). 
  \begin{itemize}[label=--]
    \item
      It is an epimorphism if and only if \(I\) is non-empty.
    \item
      If it is an epimorphism, it is also stable.
    \item
      It is a regular epimorphism if and only if \( \Join_{i \in I} X_i
      \iso Y \).
    \item
      If it is a regular epimorphism, it is stable if and only if the above
      join is \textit{distributive}, that is,
      \begin{equation}
        \label{eq:dist.cond}
        Z \meet \Join_{i \in I} X_i \iso \Join_{i\in I} Z \meet X_i
      \end{equation}
      for all \( Z \leq Y \).
  \end{itemize}
\end{lemma}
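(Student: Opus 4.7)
The plan is to handle each bullet by unwinding definitions in the thin setting, reducing every question to a lattice-theoretic or set-theoretic statement. Throughout I would invoke the reduction recorded before Theorem~\ref{thm:main.result}: each of the classes under consideration is closed under coproducts and stable under pullback, so it suffices to work with a morphism whose codomain is a singleton family.

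For the first bullet I would argue directly from the universal property of epimorphism. A parallel pair $g, h \colon (Y) \to (Z_k)_{k \in K}$ in $\Fam(\cat V)$ is determined by its function components $k_g, k_h \in K$, since arrows in the thin $\cat V$ between the same pair of objects are automatically equal. Equality of the composites with $(!, \phi)$ forces $k_g = k_h$ on every $i \in I$, which yields $k_g = k_h$ whenever $I$ is nonempty; conversely, for empty $I$ a trivial counterexample (e.g.\ $K = \{0, 1\}$, $Z_0 = Z_1 = Y$, with $g$ and $h$ choosing different copies) shows failure of epicity. The second bullet then follows by computing the pullback fiberwise: since pullbacks in a thin $\cat V$ are binary meets, the pullback of $(!, \phi)$ along any $(!, \omega) \colon (Z_k)_{k \in K} \to (Y)$ has fibers $(X_i \meet Z_k)_{i \in I}$ over each $Z_k$, which are epimorphisms by the first bullet since $I$ is nonempty.

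For the third bullet I would invoke Lemma~\ref{lem:fam.reg.epis}: $(!, \phi)$ is a regular epimorphism iff $D_{(!, \phi)}$ admits a colimit isomorphic to $Y$. Colimits in a thin category are joins, and the meets $X_i \meet X_j$ appearing in $D_{(!, \phi)}$ are bounded above by the individual $X_i$, so the join collapses to $\Join_{i \in I} X_i$. The fourth bullet follows from the stability clause of Lemma~\ref{lem:fam.reg.epis}: a pullback along $(!, \omega) \colon (Z_k)_{k \in K} \to Y$ is a regular epimorphism iff $\Join_{i \in I} (X_i \meet Z_k) \iso Z_k$ for each $k$. Since $\Join_i X_i \iso Y$ and $Z_k \leq Y$ imply $Z_k \iso Z_k \meet \Join_i X_i$, this condition is precisely the distributivity identity $Z \meet \Join_i X_i \iso \Join_i (Z \meet X_i)$ for all $Z \leq Y$.

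I do not anticipate any serious obstacle: once Lemma~\ref{lem:fam.reg.epis} is available the proof is essentially definitional in the thin setting. The one piece of care required, in the stability clauses, is to keep track of the equivalence between pullback-stability of the regular epimorphism and stability of the associated join under meet, which follows directly from the fact that pullbacks in thin categories are just meets.
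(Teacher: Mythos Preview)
Your proposal is correct and follows essentially the same route as the paper: both reduce the regular-epimorphism and stability clauses to Lemma~\ref{lem:fam.reg.epis} (joins in the thin case, with stability becoming distributivity under meet), and both handle the first two bullets by noting that epicity is detected on the index function $I \to 1$ and that pullbacks are computed by meets. Your treatment of the first bullet is slightly more explicit than the paper's---you unwind the epimorphism condition directly rather than simply asserting that it amounts to surjectivity of $I \to 1$---but this is a matter of presentation, not a different argument.
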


\begin{proof}
  We note that \( (X_i)_{i \in I} \leq Y \) is an epimorphism if and only if
  the underlying function \( I \to 1 \) is surjective, and this is the case
  exactly when \(I\) be non-empty. 

  So, if \(I\) is non-empty, we confirm \( (X_i)_{i \in I} \leq Y \) is a
  stable epimorphism: given \( Z \leq Y \) we can produce an epimorphism \(
  (Z\meet X_i)_{i \in I} \leq Z \), since \( \cat V \) has meets. By taking
  coproducts, the same holds for all \( (Z_j)_{j \in J} \leq Y \).

  We immediately deduce from Lemma \ref{lem:fam.reg.epis}, that \( (X_i)_{i
  \in I} \leq Y \) is a regular epimorphism if and only if \( \Join_{i \in I}
  X_i \iso Y \), and stability under pullbacks is exactly the condition
  \eqref{eq:dist.cond}, so there's nothing to verify.
\end{proof}

We say a thin category \( \cat V \) is a \textit{Heyting semilattice} (also
known as an \textit{implicative semilattice} \cite{Nem65} or a \textit{Brouwerian
semilattice} \cite{Köh81}) if it has finite limits (has meets and is bounded)
and is cartesian closed (has implication). In particular, this means that \( a
\meet - \) is a left adjoint functor for each \( a \in \cat V \), which must
preserve colimits (joins). As a corollary, we conclude that:

\begin{corollary}
  \label{cor:hsl.reg.epi.stab}
  If \( \cat V \) is a Heyting semi-lattice, regular epimorphisms in \(
  \FamV \) are stable.
\end{corollary}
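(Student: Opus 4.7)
The plan is to reduce to the distributivity condition \eqref{eq:dist.cond} isolated in Lemma~\ref{lem:thin.fam.epis} and then observe that, in a Heyting semilattice, this condition comes for free from cartesian closedness.

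More precisely, let $(X_i)_{i\in I} \leq Y$ be a regular epimorphism in $\FamV$. By Lemma~\ref{lem:thin.fam.epis}, this is equivalent to saying that $I$ is non-empty and $\Join_{i\in I} X_i \iso Y$, and stability is equivalent to the distributivity condition
\begin{equation*}
  Z \meet \Join_{i \in I} X_i \iso \Join_{i\in I} (Z \meet X_i)
\end{equation*}
holding for all $Z \leq Y$ (the case of a general family $(Z_j)_{j\in J} \leq Y$ then follows by the coproduct argument already used in the proof of Lemma~\ref{lem:thin.fam.epis}). So the only thing to check is this single distributivity equation.

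Since $\cat V$ is a Heyting semilattice, for every $Z \in \cat V$ the functor $Z \meet - \colon \cat V \to \cat V$ admits a right adjoint, namely the implication $Z \Rightarrow -$. Left adjoints preserve all colimits that exist in their domain; in particular $Z \meet -$ preserves the join $\Join_{i\in I} X_i$, which is exactly \eqref{eq:dist.cond}. This gives the distributivity condition, and the corollary follows from Lemma~\ref{lem:thin.fam.epis}.

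There is essentially no obstacle here: the two preceding lemmas have done all the work, and the corollary is just the observation that cartesian closedness of $\cat V$ forces meets to distribute over arbitrary joins.
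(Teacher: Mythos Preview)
Your argument is correct and is essentially the same as the paper's: the paper simply observes that the distributivity condition~\eqref{eq:dist.cond} of Lemma~\ref{lem:thin.fam.epis} holds because in a Heyting semilattice each $Z \meet -$ is a left adjoint and hence preserves joins. You have spelled this out in more detail, but the content is identical.
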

\begin{proof}
  Condition \eqref{eq:dist.cond} is automatically satisfied, by the previous
  remark.
\end{proof}

\begin{corollary}
  \label{cor:chey.reg.epi.ef.desc}
  If \( \cat V \) is a (co)complete Heyting (semi-)lattice, then regular
  epimorphisms in \( \FamV \) are effective for descent.
\end{corollary}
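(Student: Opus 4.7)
The plan is to invoke Theorem~\ref{cor:fam.eff.desc}: for each regular epimorphism $\phi \colon (X_i)_{i \in I} \leq Y$ in $\FamV$, it suffices to exhibit an equivalence $\cat V/Y \eqv \Desc_\conn(\phi)$. Because $\cat V$ is thin, both sides are posets, so the task reduces to constructing mutually inverse monotone maps on the underlying orders.

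First, I would unpack connected descent data in this setting. Following the description of \eqref{eq:conn.desc.data} in the proof of Lemma~\ref{lem:desc.fam.conn.desc}, a connected object of $\Desc(\phi)$ is specified by a family $(V_i)_{i\in I}$ with $V_i \leq X_i$ for each $i$, equipped with the descent isomorphism over the kernel pair of $\phi$. Pulling $(V_i)$ back along the two projections $p_0, p_1 \colon (X_i \times_Y X_j)_{i,j \in I \times I} \to (X_i)_{i \in I}$ yields $(V_i \meet X_j)_{i,j}$ and $(V_j \meet X_i)_{i,j}$ in $\FamV$, and in the thin context the existence of an isomorphism between these amounts to the gluing equations
\begin{equation*}
  V_i \meet X_j \iso V_j \meet X_i \quad \text{for all } i,j \in I.
\end{equation*}
Cocycle and identity conditions are automatic in a poset.

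Next, I would set up the comparison $\mathcal K^{\phi}_\conn \colon \cat V/Y \to \Desc_\conn(\phi)$ by $Z \mapsto (Z \meet X_i)_{i \in I}$, which trivially satisfies the gluing equations, and propose the candidate inverse $(V_i)_{i \in I} \mapsto \Join_{i \in I} V_i$, whose value lies in $\cat V/Y$ since $V_i \leq X_i \leq Y$. To verify these are mutually inverse, I would compute both composites. Starting from $Z \leq Y$, Lemma~\ref{lem:thin.fam.epis} gives $\Join_{i \in I} X_i \iso Y$, hence
\begin{equation*}
  \Join_{i \in I} (Z \meet X_i) \iso Z \meet \Join_{i \in I} X_i \iso Z \meet Y \iso Z.
\end{equation*}
Starting from connected descent data $(V_i)_{i \in I}$, the gluing equations yield
\begin{equation*}
  \Bigl(\Join_{i \in I} V_i\Bigr) \meet X_j \iso \Join_{i \in I} (V_i \meet X_j) \iso \Join_{i \in I} (V_j \meet X_i) \iso V_j \meet Y \iso V_j,
\end{equation*}
recovering the original family.

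The only nontrivial ingredient is the distributivity of $\meet$ over arbitrary $\Join$ used in both composites; this is the point where the Heyting hypothesis is essential, exactly as already exploited in Corollary~\ref{cor:hsl.reg.epi.stab}: since $a \meet (-)$ has a right adjoint (implication), it preserves arbitrary joins. Once this is invoked, the remaining verifications are a routine bookkeeping exercise in a thin category, so I expect no further obstacle.
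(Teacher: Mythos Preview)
Your proposal is correct and follows essentially the same route as the paper: reduce via Theorem~\ref{cor:fam.eff.desc} to identifying $\cat V/Y$ with $\Desc_\conn(\phi)$, send connected descent data $(V_i)_i$ to $\Join_i V_i$, and use Heyting distributivity to verify $X_j \meet \Join_i V_i \iso V_j$. The paper only records this essential-surjectivity direction explicitly and defers the remaining half to \cite[Corollary~2.3]{PL23}, whereas you check both round-trips by hand; apart from that expository difference the arguments coincide.
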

\begin{proof}
  Let \( (X_i)_{i\in I} \leq Y \) be a regular epimorphism. Given connected
  descent data \( \id \colon (W_i)_{i \in I} \to (X_i)_{i \in I} \), \( \pi_2
  \colon (W_i \meet X_j)_{i,j \in I\times I} \to (W_i)_{i \in I} \), we may
  define \( Z = \Join_{i \in I} W_i \). 

  We note that it is enough to prove that \( X_i \meet Z \leq W_i \) for all
  \( i \in I \). Indeed, by distributivity, we have
  \begin{equation*}
    X_i \land Z \iso \Join_{j\in I} X_i \land W_j \leq W_i. 
  \end{equation*}
  Now, Theorem \ref{cor:fam.eff.desc} and \cite[Corollary 2.3]{PL23} complete
  our proof.
\end{proof}

With this, we obtain one direction of \cite[Theorem 5.4]{CH04}:

\begin{theorem}
  \label{thm:chey.cat.ef.desc}
  Let \( \cat V \) be a (co)complete Heyting (semi-)lattice, and let \(F
  \colon \cat C \to \cat D \) be a \( \cat V \)-functor. If \(F\) is
  surjective on objects and we have an isomorphism
  \begin{equation}
    \label{eq:heyt.reg.epi}
    \Join_{x_i \in F^*y_i} \cat C(x_0,x_1,x_2) 
      \iso \cat D(y_0,y_1,y_2) 
  \end{equation}
  for all \(y_0,y_1,y_2 \), then \(F\) is effective for descent.
\end{theorem}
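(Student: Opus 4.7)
The plan is to apply Theorem~\ref{thm:main.result} directly, by verifying its three hypotheses \ref{enum:ef.desc.sings}, \ref{enum:desc.pairs}, \ref{enum:al.desc.trips} for \(F\). In the thin setting, Lemma~\ref{lem:thin.fam.epis} and Corollary~\ref{cor:chey.reg.epi.ef.desc} together collapse the hierarchy of descent classes in \(\FamV\): a morphism \((X_i)_{i \in I} \leq Y\) is an almost descent morphism exactly when \(I\) is non-empty, and is a descent (equivalently, effective descent) morphism precisely when additionally \(\Join_{i \in I} X_i \iso Y\). Moreover, the product of hom-objects in \(\cat V\) is the meet, so each of \ref{enum:ef.desc.sings}, \ref{enum:desc.pairs}, \ref{enum:al.desc.trips} becomes a purely order-theoretic statement.

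Condition \ref{enum:desc.pairs} is then literally the hypothesised isomorphism \eqref{eq:heyt.reg.epi}, and condition \ref{enum:al.desc.trips} reduces to the assertion that the quadruple index set \(\{(x_0, x_1, x_2, x_3) : x_i \in F^* y_i\}\) is non-empty, which is immediate from surjectivity of \(F\) on objects. The only nontrivial step is to derive condition \ref{enum:ef.desc.sings} from \eqref{eq:heyt.reg.epi}. For this, I specialise the hypothesis to the triple \((y_0, y_1, y_1)\): since \(I = \top\) in a Heyting semilattice, the unit map \(\uu{\cat C} \colon I \to \cat C(x, x)\) forces \(\cat C(x, x) = \top\) (and similarly \(\cat D(y_1, y_1) = \top\)), so the right-hand side of \eqref{eq:heyt.reg.epi} collapses to \(\cat D(y_0, y_1)\).

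The left-hand side is bounded above by \(\Join_{x_0 \in F^*y_0, x_1 \in F^*y_1} \cat C(x_0, x_1)\), using \(\cat C(x_1, x_2) \meet \cat C(x_0, x_1) \leq \cat C(x_0, x_1)\) together with non-emptiness of \(F^* y_1\) to guarantee the \(x_2\)-index is inhabited; this join is in turn bounded above by \(\cat D(y_0, y_1)\) because \(F\) is a \(\cat V\)-functor. The two bounds must coincide, yielding \(\Join_{x_0 \in F^*y_0, x_1 \in F^*y_1} \cat C(x_0, x_1) \iso \cat D(y_0, y_1)\), which is condition~\ref{enum:ef.desc.sings}. I expect no real obstacle beyond correctly tracking the index sets; the structural insight is that the Heyting hypothesis flattens all the relevant descent notions in \(\FamV\) to join-surjectivity, so the single equation \eqref{eq:heyt.reg.epi} already contains enough information to control all three conditions of Theorem~\ref{thm:main.result}.
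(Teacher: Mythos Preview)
Your proposal is correct and follows essentially the same route as the paper: verify conditions \ref{enum:ef.desc.sings}--\ref{enum:al.desc.trips} of Theorem~\ref{thm:main.result} using the collapse of descent classes in \(\FamV\) for thin \(\cat V\), and obtain \ref{enum:ef.desc.sings} by specialising \eqref{eq:heyt.reg.epi} to \(y_2 = y_1\). Your explicit remark that the \(x_2\)-index must be inhabited (via surjectivity) is a small improvement in precision over the paper's write-up; conversely, the observation \(\cat C(x,x) = \top\) is correct but unused, since only \(\cat D(y_1,y_1) = \top\) is needed to simplify the right-hand side.
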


\begin{proof}
  Due to Lemma \ref{lem:thin.fam.epis}, we may conclude that
  \begin{itemize}[label=--]
    \item
      condition \ref{enum:al.desc.trips} is satisfied, since \(F\) is
      surjective on objects, and
    \item
      condition \ref{enum:desc.pairs} is given by
      \eqref{eq:heyt.reg.epi}, plus the stability of regular epimorphisms
      provided by Corollary \ref{cor:hsl.reg.epi.stab}.
  \end{itemize}

  Condition \ref{enum:ef.desc.sings} remains to be verified. Taking \( y_1 =
  y_2 \) above, so that \( \cat D(y_1,y_2) \iso 1 \), we have
  \begin{equation*}
    \cat D(y_0,y_1) \iso 
      \Join_{x_i \in F^*y_i} \cat C(x_0,x_1,x_2)
      \leq \Join_{x_i \in F^*y_i} \cat C(x_0,x_1),
  \end{equation*}
  and since we have \( \cat C(x_0,x_1) \leq \cat D(y_0,y_1) \) for all \(x_i
  \in F^*y_i \), we conclude that \( (\cat C(x_0,x_1))_{x_i \in F^*y_i} \leq
  \cat D(y_0,y_1) \) is a regular epimorphism in \( \FamV \), and therefore is
  effective for descent by Corollary \ref{cor:chey.reg.epi.ef.desc}.
\end{proof}

\subsection*{Regular categories:}

The ideas behind the previous results generalize to regular categories \( \cat
V \), via their (regular epi, mono)-factorization system, which allow us to
reduce statements about epimorphisms \( \phi \colon (X_i)_{i\in I} \to Y \) in
\( \FamV \) to families of monomorphisms. The following result makes this
precise:

\begin{lemma}
  \label{lem:reg.epi.iff.join.img}
  Suppose \( \cat V \) is a regular category, and let \( \phi \colon (X_i)_{i
  \in I} \to Y \) be a morphism in \( \FamV \). For each \(i \in I\), consider
  the following factorization
  \begin{equation}
    \label{eq:fam.fact}
    \begin{tikzcd}
      X_i \ar[r,"\pi_i"] & M_i \ar[r,"\iota_i"] & Y
    \end{tikzcd}
  \end{equation}
  where \( \pi_i \) is a regular epimorphism and \( \iota_i \) is a
  monomorphism for all \(i \in I\).
 
  \begin{itemize}[label=--]
    \item
      \( \phi \) is a (stable) epimorphism if and only if \( \iota \) is a
      (stable) epimorphism.

    \item
      \( \phi \) is a (stable) regular epimorphism if and only if \( \Join_{i
      \in I} M_i \iso Y \) (and the join is stable).

    \item
      If \( \pi_i \) is an effective descent morphism in \( \cat V \) for all
      \(i \in I \), then \( \phi \) is an effective descent morphism if and
      only if \( \iota \) is an effective descent morphism.
  \end{itemize}
\end{lemma}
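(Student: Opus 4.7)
The strategy is to factor $\phi = \iota \circ \pi$ in $\FamV$, where $\pi \colon (X_i)_{i \in I} \to (M_i)_{i \in I}$ is the morphism over $\id_I$ whose $i$th component is the regular epi $\pi_i$, and $\iota \colon (M_i)_{i \in I} \to Y$ is the family of monos $\iota_i$. Each bullet then becomes a statement about transferring a property between $\phi$ and $\iota$ via the intermediate $\pi$. For the first bullet, regular epis in $\cat V$ are stable under pullback (as $\cat V$ is regular), and pullbacks in $\FamV$ of morphisms over a common indexing set are computed fiberwise, so $\pi$ is a stable regular epi in $\FamV$; right-cancellation, applied also after pulling back along an arbitrary morphism into $Y$, then gives that $\phi$ is a (stable) epi if and only if $\iota$ is.

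For the second bullet, I would apply Lemma \ref{lem:fam.reg.epis} to reduce to computing $\colim D_\phi$. The regular epis $\pi_i$, together with the induced maps on pullbacks $X_i \times_Y X_j \to M_i \times_Y M_j$ (still regular epis, by regularity of $\cat V$), assemble into a cocone $D_\phi \to D_\iota$ that is levelwise a regular epi in $\cat V$; a routine regular-category argument yields $\colim D_\phi \iso \colim D_\iota$. Since each $\iota_i$ is a monomorphism, the pullbacks in $D_\iota$ reduce to intersections $M_i \meet M_j$ of subobjects of $Y$, so $\colim D_\iota$ is the join $\Join_{i \in I} M_i$ in the subobject lattice of $Y$. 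The stability clause transports between the two descriptions by running the same argument after pulling back.

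For the third bullet, the crucial auxiliary step is that $\pi$ itself is an effective descent morphism in $\FamV$ whenever each $\pi_i$ is effective for descent in $\cat V$. Since $\pi$ sits over $\id_I$, its kernel pair in $\FamV$ is the family $(X_i \times_{M_i} X_i)_{i \in I}$ indexed over the diagonal of $I \times I$ (the off-diagonal pullbacks being empty), and descent data for $\pi$ decomposes fiberwise into descent data for each individual $\pi_i$; via Theorem \ref{cor:fam.eff.desc} this yields the required equivalence $\cat V / (M_i)_{i \in I} \eqv \Desc_\conn(\pi)$. With $\pi$ now effective for descent, the biconditional follows from two standard properties of effective descent morphisms in a category with pullbacks: closure under composition (so that $\iota$ effective for descent implies $\phi$ effective for descent) and right-cancellation (giving the converse, using that $\pi$ is effective for descent). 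The main obstacle is the rigorous verification of the auxiliary step, which requires unpacking how descent data on $\pi$ decomposes across the fibers of $\FamV \to \Set$ into independent descent data on each $\pi_i$.
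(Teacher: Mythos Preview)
Your overall strategy---factor $\phi = \iota \circ (\id,\pi)$ and transfer each property across the intermediate morphism $(\id,\pi)$---is exactly the paper's. The paper compresses your first two bullets into a single appeal to the cancellation lemmas of \cite[Propositions~1.3, 1.5]{JST04}: since $(\id,\pi)$ is a stable regular epimorphism in $\FamV$, $\phi$ is a (stable) (regular) epimorphism iff $\iota$ is, and the join description is then read off directly from Lemma~\ref{lem:fam.reg.epis} applied to $\iota$. Your route via $\colim D_\phi \cong \colim D_\iota$ reaches the same place, but the ``routine regular-category argument'' you gesture at is really this cancellation in disguise rather than a general fact about levelwise regular epis between diagrams.

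Your third bullet has two imprecisions worth flagging. First, the kernel pair of $\pi$ in $\FamV$ is indexed by $I \times_I I \cong I$, not by $I \times I$ with ``empty'' off-diagonal entries; there simply are no off-diagonal components. Second, Theorem~\ref{cor:fam.eff.desc} and the category $\Desc_\conn$ are formulated only for morphisms into a \emph{singleton}-indexed family, so invoking them for $\pi \colon (X_i)_{i\in I} \to (M_i)_{i\in I}$ does not typecheck as written. The paper sidesteps both issues: $\FamV$ is extensive, so $(\id,\pi) = \coprod_i \pi_i$ is a coproduct of effective descent morphisms and hence effective for descent by \cite[Section~4]{JT97}; the biconditional then follows from closure under composition and right-cancellation, exactly as you conclude. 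Your fiberwise idea can be made rigorous (via the decomposition $\FamV/(M_i)_{i\in I} \simeq \prod_i \FamV/M_i$), but the coproduct argument is shorter.
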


\begin{proof}
  The factorizations \eqref{eq:fam.fact} for each \( i \in I \) give a
  factorization \( \phi = \iota \circ (\id, \pi) \) in \( \FamV \). We note
  that \( (\id,\pi) \) is a coproduct of stable regular epimorphisms, hence \(
  \phi \) is a (stable) (regular) epimorphism if and only if \( \iota \) is a
  (stable) (regular) epimorphism (see \cite[Propositions 1.3, 1.5]{JST04}).

  Moreover, if \( \pi_i \) is effective for descent for all \(i \in I\),
  taking coproducts will guarantee \( (\id,\pi) \) is effective for descent as
  well, meaning that \( \phi \) is effective for descent if and only if \(
  \iota \) is effective for descent (because the basic bifibration respects
  the BED, see \cite[Section 4]{JT97}).

  Under this light, the results are immediate consequences of Lemma
  \ref{lem:fam.reg.epis}. 
\end{proof}

To prove our main result for this section, we let \( F \colon \cat C \to \cat
D \) be a \( \cat V \)-functor, and we consider the following (regular epi,
mono)-factorizations of the underlying morphisms of the families
\ref{enum:ef.desc.sings}, \ref{enum:desc.pairs}, \ref{enum:al.desc.trips}, as
in \eqref{eq:fam.fact}:
\begin{equation}
  \label{eq:sings.fact}
  \begin{tikzcd}[column sep=large]
    \cat C(x_0, x_1) \ar[r,"P_{x_0,x_1}"]
      & M_{x_0,x_1} \ar[r,"I_{x_0,x_1}"]
      & \cat D(y_0, y_1)
    \end{tikzcd}
\end{equation}
\begin{equation}
  \label{eq:pairs.fact}
  \begin{tikzcd}[column sep=large]
    \cat C(x_0, x_1, x_2) \ar[r]
      & M_{x_0,x_1,x_2} \ar[r,"I_{x_0,x_1,x_2}"]
      & \cat D(y_0, y_1, y_2) 
  \end{tikzcd}
\end{equation}
\begin{equation}
  \label{eq:trips.fact}
  \begin{tikzcd}[column sep=large]
    \cat C(x_0,x_1,x_2, x_3) \ar[r]
      & M_{x_0,x_1,x_2,x_3} \ar[r,"I_{x_0,x_1,x_2,x_3}"] 
      & \cat D(y_0, y_1, y_2, y_3) 
  \end{tikzcd}
\end{equation}
respectively, for objects \( y_i \) in \( \cat D \), and \( x_i \in F^*y_i \),
for \( n = 0,\, 1,\, 2,\, 3 \).

\begin{theorem}
  \label{thm:reg.cat.desc}
  Let \( \cat V \) be a regular category, and let \(F \colon \cat C \to \cat D
  \) be a \( \cat V \)-functor. If
  \begin{enumerate}[label=(\roman*)]
    \item
      \label{enum:ef.desc.img}
      \( P_{x_0,x_1} \) is an effective descent morphism in \( \cat V \) for
      each pair of objects \(x_0, x_1\),
    \item
      \label{enum:ef.desc.monos}
      we have an equivalence \( \cat V / \cat D(y_0,y_1) \eqv \Desc_\conn(I)
      \) for all \( y_0, y_1 \),
    \item
      \label{enum:desc.join.pairs}
      the join \( \Join_{x_i \in F^*y_i} M_{x_0,x_1,x_2} \) exists, is stable
      and is isomorphic to \( \cat D(y_0,y_1, y_2) \) for all \( y_0, y_1, y_2
      \),
    \item
      \label{enum:al.desc}
      \( I \colon (M_{x_0,x_1,x_2,x_3})_{x_i\in F^*y_i} \to \cat
      D(y_0,y_1,y_2,y_3) \) is an almost descent morphism in \( \FamV \) for
      all \( y_0, y_1, y_2, y_3 \),
 \end{enumerate}
  then \(F\) is effective for descent.
\end{theorem}

\begin{proof}
  The goal is to verify that properties \ref{enum:ef.desc.sings},
  \ref{enum:desc.pairs} and \ref{enum:al.desc.trips} are satisfied, so we can
  apply Theorem~\ref{thm:main.result}. 

  By Theorem \ref{cor:fam.eff.desc}, condition \ref{enum:ef.desc.monos} holds
  if and only if \( I \colon (M_{x_0,x_1})_{x_i \in F^*y_i} \to \cat D(y_0,
  y_1) \) is an effective descent morphism for all objects \( y_0, y_1 \) in
  \( \cat D \).

  Now, by applying Lemma~\ref{lem:reg.epi.iff.join.img}, we have that
  \begin{itemize}[label=--]
    \item
      \ref{enum:ef.desc.sings} follows as a consequence of
      \ref{enum:ef.desc.img} and \ref{enum:ef.desc.monos},

    \item
      \ref{enum:desc.pairs} is a consequence of \ref{enum:desc.join.pairs},

    \item
      \ref{enum:al.desc.trips} is a consequence of \ref{enum:al.desc},
  \end{itemize}
  so the result follows.
\end{proof}

In case \( \cat V \) satisfies further properties, we can simplify the above
list:
\begin{itemize}[label=--]
  \item
    If \( \cat V \) is infinitary coherent (has stable, arbitrary unions of
    subobjects), then the join in \ref{enum:desc.join.pairs} exists and is
    stable; one only needs to verify if the isomorphism exists.
  \item
    If \( \cat V \) is exact, or locally cartesian closed, then
    \ref{enum:ef.desc.img} is redundant, since regular epimorphisms are
    effective for descent.
\end{itemize}

  \section{Enrichment in cartesian monoidal categories}
    \label{sect:examples.pt2}
    Theorem \ref{thm:main.result} generalizes Lucatelli's result \cite{Luc18a} about
effective descent \(\cat V\)-functors, by not requiring \( \cat V \) to be
extensive, nor that the induced functor \( - \pt 1 \colon \Set \to \cat V \)
is fully faithful (if it even exists). Thus, we consider examples of enriching
categories \( \cat V \) not satisfying one of those aforementioned properties
(excluding examples such as \( \cat V = \Set, \Top, \Cat \)), dedicating this
section to the study of such categories \( \VCat \). 

\subsection*{Thin categories:}

Thin categories \( \cat V \) with cartesian monoidal structures are
(essentially) bounded meet-semilattices, which we have previously discussed
in Section~\ref{sect:examples}, as an illustrative example. We only briefly
repeat here that the result for (co)complete Heyting lattices \( \cat V \)
admits a particularly nice description (Theorem \ref{thm:chey.cat.ef.desc}),
which was already provided in \cite{CH04} using other techniques.

\subsection*{Colax-pointed categories:}

We consider the colax comma category \( 1 // \Cat \). To be explicit, this has
\begin{itemize}[label=--]
  \item
    objects: pairs \( (\cat C,c) \) where \( \cat C \) is a category and
    \(c \in \ob \cat C\).
  \item
    morphisms \( (\cat C,c) \to (\cat D,d) \): pairs \( (F,f) \) where
    \(F\) is a functor and \( f \colon Fc \to d \) is a morphism in \( \cat D
    \).
  \item
    identity on \( (\cat C,c) \): the pair \( (\id,\id) \).
  \item
    composite of \( (F,f) \colon (\cat C,c) \to (\cat D,d) \) with \(
    (G,g) \colon (\cat D,d) \to (\cat E,e) \): the pair \( (G\circ F, g \circ
    Gf) \).
\end{itemize}

This is the category of strict algebras and colax morphisms for the 2-monad \(
1+-\) on \( \Cat \) (the dual and codual notion is present in \cite{Gra74,
STWW80, CL20}). Hence, by \cite[Corollary 4.9]{Lac05}, \( 1 // \Cat \to \Cat \)
creates products, hence \( 1 // \Cat \) admits a cartesian monoidal structure.

However, \( 1 // \Cat \) is not an extensive category, since it doesn't have
an initial object. It doesn't even have coproducts for any pair of objects:
let \( (\cat C_1,c_1) \) and \( (\cat C_2,c_2) \) be pointed categories, and
we assume this pair has a coproduct \( (\tilde{\cat C},\tilde c) \) in \( 1 //
\Cat \), with coprojections \( (I_i,\iota_i) \colon (\cat C_i,c_i) \to
(\tilde{\cat C},\tilde c) \) for \(i=1, 2\).

Let \( F_i \colon \cat C_i \to \cat D \) be functors, and we suppose we have
morphisms \( f_i \colon Fc_i \to d \) for \( i =1,2 \). These define morphisms
\( (F_i,f_i) \colon (\cat C_i,c_i) \to (\cat D,d) \) for \( i = 1,2 \), so the
universal property guarantees there exists a unique morphism \( (G,g) \colon
(\tilde{\cat C},\tilde c) \to (\cat D,d) \) satisfying \( GI_i = F_i \) and \(
f_i = g \circ G\iota_i \). 

In fact, we can prove that \( G\tilde c \iso Fc_1 + Fc_2 \): if \(h \colon
G\tilde c \to d \) is such that \( f_i = h \circ G\iota_i \) for \(i=1,2\),
then \( (G,h) \circ (I_i,\iota_i) = (F_i,f_i) \), and by the universal
property, \( (G,g) = (G,h) \), hence \( g=h \). 

But there is no reason for \( \cat D \) to have such a coproduct: consider the
category \( \cat D \) given by the following graph
\begin{equation*}
  \begin{tikzcd}
    & x \\
    d_1 \ar[ur,"f_1"] \ar[dr]
    && d_2 \ar[ul,"f_2",swap] \ar[dl] \\
    & y
  \end{tikzcd}
\end{equation*}
and observe that the pair \(d_1,d_2\) does not have a coproduct. Thus, we
obtain the desired contradiction by letting \( F_i \colon \cat C_i \to \cat D
\) be the constant functor to \(d_i\), for \(i=1,2\).

A \( (1 // \Cat) \)-category is a 2-category \( \bicat B \) and
\begin{itemize}[label=--]
  \item
    for each \(x,y\), an object \( \hom(x,y) \in \bicat B(x,y) \),
  \item
    for each \(x\), a morphism \( e_x \colon 1_x \to \hom(x,x) \),
  \item
    for each \(x,y,z\), a morphism \( m_{x,y,z} \colon \hom(y,z) \cdot
    \hom(x,y) \to \hom(x,z) \),
  \item
    the following diagrams commute for all \(w,x,y,z\):
    \begin{equation*}
      \begin{tikzcd}
        \hom(x,y) \cdot 1_x \ar[d,"\id \cdot e_x",swap]
                            \ar[rd,equal] \\
        \hom(x,y) \cdot \hom(x,x) \ar[r,"m_{x,x,y}",swap]
        & \hom(x,y)
      \end{tikzcd}
      \quad
      \begin{tikzcd}
        & 1_y \cdot \hom(x,y) \ar[d,"e_y \cdot \id"]
                              \ar[ld,equal] \\
        \hom(x,y) 
          & \hom(y,y) \cdot \hom(x,y) \ar[l,"m_{x,y,y}"]
      \end{tikzcd}
    \end{equation*}
    \begin{equation*}
      \begin{tikzcd}[column sep=tiny]
        & (p_{y,z} \cdot p_{x,y}) \cdot p_{w,x}
          \ar[rr,equal]
          \ar[ld,"m_{x,y,z} \cdot \id",swap] 
        && p_{y,z} \cdot (p_{x,y} \cdot p_{w,x}) 
          \ar[rd,"\id \cdot m_{w,x,y}"] \\
        p_{x,z} \cdot p_{w,x} \ar[rrd,"m_{w,x,z}",swap]
        &&&& p_{y,z} \cdot p_{w,y} \ar[lld,"m_{w,y,z}"] \\
        && p_{w,z}
      \end{tikzcd}
    \end{equation*}
\end{itemize}
which is a \( \bicat B \)-enriched category on the same set of objects.

A \( (1 // \Cat) \)-functor \( (F,\Phi) \colon (\bicat B, \hom_\bicat B) \to
(\bicat C, \hom_\bicat C) \) consists of a 2-functor \( F \colon \bicat B \to
\bicat C \), and a \( \bicat C \)-functor \( \Phi \colon F_!\bicat B \to
\bicat C \), with the same underlying function on objects. 

\subsection*{Categories with zero object:}

Let \( \cat V \) be a category with a zero object, which we denote by \(1\). 
Such categories are usually not extensive, for if the zero object were strict,
we would have \( \cat V \eqv 1 \).

For a \( \cat V \)-category \( \cat C \), we write \( p_{x,y} \colon 1 \to
\cat C(x,y) \) for the uniquely determined morphism. In particular, this
implies \( \uu x = p_{x,x} \) for all \(x\), and 
\begin{equation*}
  \begin{tikzcd}[column sep=large]
    1 \ar[r,"{(p_{y,x},p_{x,y})}"]
      & \cat C(y,x) \times \cat C(x,y) \ar[r,"\cc{x,y,x}"]
      & \cat C(x,x)
  \end{tikzcd}
\end{equation*}
must also equal \( \uu x \). 

With this, we can confirm that all hom-objects must be isomorphic: the
isomorphism is given by:
\begin{equation*}
  \begin{tikzcd}[column sep=large]
    \cat C(x,y)
      \ar[rr,"p_{y,z} \times \id \times p_{w,x}"]
    && \cat C(y,z) \times \cat C(x,y) \times \cat C(w,x)
      \ar[rr,"\cc{w,y,z} \circ (\id \times \cc{w,x,y})"]
    && \cat C(w,z)
  \end{tikzcd}
\end{equation*}
Thus, we conclude \( \VCat \) has objects the empty \( \cat V \)-category plus
pairs (non-empty set, \(\cat V\)-monoid).

\subsection*{Eckmann-Hilton:}

We suppose \( \cat V \) is the category of unital magmas. By the
Eckmann-Hilton argument, a \( \cat V \)-monoid is precisely a commutative
monoid. Since \( \cat V \) has a zero object, we conclude \( \VCat \)
essentially has objects the empty \( \cat V \)-category plus pairs (non-empty
set, commutative monoid), in which case the effective descent \( \cat V
\)-functors are given by the empty \( \cat V \)-functor on the empty \( \cat V
\)-category, and pairs (surjective function, regular epimorphism of monoids).

\subsection*{Coextensive categories:}

We say a category \( \cat V \) with finite limits 
\begin{itemize}[label=--]
  \item
    has \textit{codisjoint} products if \( \cat V^\op \) has disjoint
    coproducts,
  \item
    has \textit{a strict terminal object} if \( \cat V^\op \) has a strict
    initial object,
  \item
    is \textit{finitely coextensive} if \( \cat V^\op \) is finitely
    extensive.
\end{itemize}

As expected, finitely coextensive categories \( \cat V \) have codisjoint
products and a strict terminal object. This is the case for the categories of
commutative \(R\)-algebras for a ring \(R\), as a class of examples.

We verify that cartesian monoidal categories \( \cat V \) with codisjoint
products and strict terminal object do not provide an interesting enriching
base with the cartesian monoidal structure: we shall confirm that \( \VCat
\eqv \Set \).  Therefore, the effective descent \( \cat V \)-functors are
precisely those that are surjective on objects.

Let \( \cat C \) be a \( \cat V \)-category. For each \(x \in \ob \cat C \),
the unit morphism \( 1 \to \cat C(x,x) \) is an isomorphism, and for each pair
\(x,y \in \ob \cat C \), the composition morphism \( \cat C(x,y) \times \cat
C(y,x) \to 1 \) is uniquely determined. Thus, the associativity condition for
elements in \( \cat C(x,y,x,y) \) translates to saying that the projections on
the first and third component
\begin{equation*}
  \begin{tikzcd}
    \cat C(x,y) \times \cat C(y,x) \times \cat C(x,y) 
      \ar[r,"! \times \id",swap,shift right]
      \ar[r,"\id \times !",shift left]
      & \cat C(x,y)
  \end{tikzcd}
\end{equation*}
are equal. But since products are codisjoint, we must have \( \cat C(x,y) \iso
1 \), for all \(x,y\).

\subsection*{Categories of spaces:}

Since most varieties of algebras \( \cat V \) seem to have an uninteresting
\( \VCat \) for the cartesian monoidal structure, we turn our attention to
categories of spaces. 

We begin by instantiating our results when \( \cat V = \CHaus \) of compact
Hausdorff spaces, which is a pretopos \cite{MR20}, and therefore is a
Barr-exact category, but it is not infinitary extensive. Let \( F \colon \cat
C \to \cat D \) be a \( \CHaus \)-functor between \( \CHaus \)-categories, and
consider the factorizations \eqref{eq:sings.fact}, \eqref{eq:pairs.fact} and
\eqref{eq:trips.fact} of the hom-morphisms of \(F\). By Theorem
\ref{thm:reg.cat.desc}, \(F\) is effective for descent if 
\begin{itemize}[noitemsep,label=--]
  \item
    \( \CHaus / \cat D(y_0,y_1) \eqv \Desc_\conn(I) \) for all objects \( y_0,
    y_1 \) in \( \cat D \),
  \item
    We have a stable join \( \Join_{x_0,x_1,x_2} M_{x_0,x_1,x_2} \eqv \cat
    D(y_0,y_1,y_2) \), for all \( y_0 \), \( y_1 \), \(y_2\),
  \item
    \( I \colon (M_{x_0,x_1,x_2,x_3})_{x_i} \to \cat D(y_0,y_1,y_2,y_3) \) is
    an almost descent morphism in \( \Fam(\CHaus)\) for all \( y_0 \), \( y_1
    \), \(y_2\), \(y_3\).
\end{itemize}

Similarly, since the category \( \cat V = \Stn \) of Stone spaces is a regular
category \cite{MR20}, we can also say something about \( \Stn \)-functors. Let
\( F \colon \cat C \to \cat D \) be a \( \Stn \)-functor between \( \Stn
\)-categories, and we consider the factorizations \eqref{eq:sings.fact},
\eqref{eq:pairs.fact} and \eqref{eq:trips.fact} of the hom-morphisms of \(F\).
Again by Theorem \ref{thm:reg.cat.desc}, \( F \) is effective for descent if 
\begin{itemize}[noitemsep,label=--]
  \item
    \( P_{x_0,x_1} \) is an effective descent morphism in \( \Stn \) for each
    pair \( x_0,x_1 \),
  \item
    \( \Stn / \cat D(y_0,y_1) \eqv \Desc_\conn(I) \) for all objects \( y_0,
    y_1 \) in \( \cat D \),
  \item
    We have a stable join \( \Join_{x_0,x_1,x_2} M_{x_0,x_1,x_2} \eqv \cat
    D(y_0,y_1,y_2) \), for all \( y_0 \), \( y_1 \), \(y_2\),
  \item
    \( I \colon (M_{x_0,x_1,x_2,x_3})_{x_i} \to \cat D(y_0,y_1,y_2,y_3) \) is
    an almost descent morphism in \( \Fam(\Stn)\) for all \( y_0 \), \( y_1
    \), \(y_2\), \( y_3 \).
\end{itemize}

It would be worthwhile to explore whether analogous arguments are fruitful in
the more general setting of (monad, frame)-enriched categories. More
specifically, we may take \( \cat V \) to be the category of ordered compact
Hausdorff spaces \cite{Tho09}, or the category of ultrametric compact
Hausdorff spaces.

Finally, we may consider a topos \( \cat V \), so that the effective descent
morphisms are exactly the epimorphisms. It is not always the case that there
exists a functor \( - \pt 1 \colon \Set \to \cat V \), and when it does, it is
not always fully faithful\footnote{Grothendieck toposes satisfying this
property are said to be \textit{hyperconnected}.}, so we can use Theorem
\ref{thm:main.result} expand our knowledge of effective descent \( \cat V
\)-functors to all toposes \( \cat V \). Let \( F \colon \cat C \to \cat D \)
be a \( \cat V \)-functor between \( \cat V \)-categories. If

\begin{itemize}[noitemsep,label=--]
  \item
    \(  (F_{x_0,x_1})_{x_i \in F^*y_i} \colon (\cat C(x_0,x_1))_{x_i \in
    F^*y_i} \to \cat D(y_0,y_1) \) is an effective descent morphism in \(
    \FamV \) for all pairs \( y_0, y_1 \) of objects in \( \cat D \),
  \item
    \( (F_{x_0,x_1,x_2})_{x_i \in F^*y_i} \colon (\cat C(x_0,x_1,x_2))_{x_i
    \in F^*y_i} \to \cat D(y_0,y_1,y_2) \) is a descent morphism in \( \FamV
    \) for all triples \( y_0,y_1, y_2\) of objects in \( \cat D \),
  \item
    \(  (F_{x_0,x_1,x_2,x_3})_{x_i \in F^*y_i} \colon (\cat
    C(x_0,x_1,x_2,x_3))_{x_i \in F^*y_i} \to \cat D(y_0,y_1,y_2,y_3) \) is
    an almost descent morphism in \( \FamV \) for all quadruples \( y_0,
    y_1, y_2, y_3 \) of objects in \( \cat D \),
\end{itemize}
then \(F\) is an effective descent \( \cat V \)-functor. While these
conditions can be refined in general with Theorem \ref{thm:reg.cat.desc}, if
\( \cat V \) is a Grothendieck topos, then we can take advantage of the fact
that \( \FamV \eqv \cat V \downarrow \Set \) is a Grothendieck topos as well,
in which case \(F\) is an effective descent functor whenever
\begin{equation*}
 (F_{x_0,x_1,x_2,x_3})_{x_i \in F^*y_i} \colon 
    (\cat C(x_0,x_1,x_2,x_3))_{x_i \in F^*y_i} \to \cat D(y_0,y_1,y_2,y_3)
\end{equation*}
is an epimorphism in \( \FamV \) for all \( y_0,y_1,y_2,y_3 \).

  \section{Future work}
    \label{sect:future.work}
    Having established sufficient conditions for effective descent in \( \VCat \)
for cartesian monoidal categories \( \cat V \), an obvious continuation would
be to extend this result to suitable monoidal categories \( \cat V \).  We
describe a strategy which would rely on the present work; we denote \(
\CartCat \) and \( \SymMndCat \) for the 2-categories of cartesian (monoidal)
categories and symmetrical monoidal categories. 

Provided \( \CartCat \) has needed (strict) codescent objects, the left
2-adjoint (biadjoint) (pseudo)functor of the forgetful 2-functor \(
\CartCat \to \SymMndCat \) exists; the existence and an explicit description
of such a left 2-adjoint (biadjoint) would be provided via the biadjoint
triangle theorem \cite[Theorem 4.4]{Luc16} (see also \cite[Theorem
2.3]{Luc18b}):
\begin{equation*}
  \begin{tikzcd}
    \CartCat \ar[rr] \ar[rd] 
      && \SymMndCat \ar[ld] \\
    & \Cat
  \end{tikzcd}
\end{equation*}
where every 2-functor is forgetful. Both functors to \( \Cat \) have left
2-adjoints which are easy to describe.

So, if the existence of the left biadjoint \( F \colon \SymMndCat \to
\CartCat \) is guaranteed, we need to study the following questions:
\begin{itemize}[label=--]
  \item
    What conditions on \( \cat V \) guarantee existence of pullbacks in \(
    F\cat V \)? 
  \item
    Is the unit \( \eta \colon \cat V \to F\cat V \) fully faithful?
\end{itemize}

After obtaining solutions to the above questions, we could then study the functor
\begin{equation*}
  \eta_! \colon \VCat \to F\VCat,
\end{equation*}
which raises the ultimate question: does it reflect effective descent
morphisms? An affirmative answer would provide a string of functors
\begin{equation*}
  \begin{tikzcd}
    \VCat \ar[r] & F\VCat \ar[r]
                 & \Fam(F \cat V)\dash \Cat \ar[r]
                 & \Cat(\Fam(F\cat V))
  \end{tikzcd}
\end{equation*}
that reflect effective descent. Then, since \( F\cat V \) is hypothetically a
cartesian monoidal category with finite limits, we obtain a more general
result via Theorem~\ref{thm:main.result}. Combined with an adequate study of
effective descent morphisms in \( F\cat V \), these results can be applied in
the study of effective descent morphisms in \( \VCat \) for any symmetrical
monoidal category \( \cat V \).

\iffalse
Another line of research would be the extension of our results to \( (T,\cat
V) \)-categories, for \( \cat V \) a cartesian monoidal, not necessarily thin,
category. A strategy similar to the one presented in this work would rely on
knowledge of effective descent morphisms in internal multicategories, a
project that was carried out in \cite{PL23}, followed by obtaining an
embedding \( \TVCat \to \CatTV \), which is a topic for future work.
\fi

  \printbibliography

\end{document}